\newtheorem{Thm}{Theorem}[section]
\newtheorem{Lemma}[Thm]{Lemma}
\newtheorem{Prop}[Thm]{Proposition}
\newtheorem{Def}[Thm]{Definition}
\def\qed{~\hfill$\square$\medbreak}
\def\N{{\mathbb N}}
\def\Z{{\mathbb Z}}
\def\R{{\mathbb R}}
\def\P{{\mathbb P}}
\def\fa{{\mathfrak a}}
\def\fg{{\mathfrak g}}
\def\fh{{\mathfrak h}}
\def\fk{{\mathfrak k}}
\def\fm{{\mathfrak m}}
\def\fn{{\mathfrak n}}
\def\fp{{\mathfrak p}}
\def\fq{{\mathfrak q}}
\def\cV{\mathcal V}
\def\cA{\mathcal A}
\def\cC{\mathcal C}
\def\cU{\mathcal U}
\def\D{\mathbb D}
\def\sh{\sinh}
\def\ch{\cosh}
\def\es{e^s}
\def\e-s{e^{-s}}
\def\des{e^{2s}}
\def\de-s{e^{-2s}}
\def\SO{\mathrm{SO}}
\def\half{\tfrac12}
\begin{document}
\title[Cuspidal discrete series]
{Cuspidal discrete series\\for semisimple symmetric spaces}
\author[Andersen, Flensted-Jensen
and Schlichtkrull]{Nils Byrial Andersen, Mogens Flensted--Jensen\\
and Henrik Schlichtkrull}
\address{Department of Mathematics,
Aarhus University,
Ny Munkegade 118,
Building 1530,
DK-8000 Aarhus C,
Denmark}
\email{byrial@imf.au.dk}
\address{Department of Mathematical Sciences,
University of Copenhagen,
Universitetsparken 5,
DK-2100 Copenhagen \O,
Denmark}
\email{mfj@life.ku.dk, schlicht@math.ku.dk}
\keywords{semisimple symmetric space, hyperbolic space,
 discrete series, cuspidal, non-cuspidal}
\subjclass[2010]{Primary 43A85; Secondary 22E30}
\begin{abstract}
We propose a notion of cusp forms on semisimple symmetric spaces.
We then study the real hyperbolic spaces in detail, and show that there exists both
cuspidal and non-cuspidal discrete series. In particular, we show that all the spherical discrete series are
non-cuspidal.
\end{abstract}
\maketitle

\section{Introduction}

The main purpose of this paper is to initiate a
generalization of Harish-Chandra's notion of cusp forms for a real
semisimple Lie group $G$ to the more general case of a semisimple symmetric
space $G/H$. In Harish-Chandra's work on the Plancherel formula for $G$
the fact that all discrete series are cuspidal plays an important role.
However, in the established generalizations to $G/H$
(see \cite{FJ}, \cite{O-M}, \cite{Del}, \cite{B-SI},
\cite{B-SII}), cuspidality plays no role and,
in fact, is not defined at all.

We propose a notion of cuspidal discrete series for
semisimple symmetric spaces $G/H$ in general,
and we
show by explicit calculations on the real hyperbolic spaces
$\SO(p,q+1)_e/\SO(p,q)_e$
that the notion is meaningful in that case.
The notion agrees with the standard one of
Harish-Chandra for the discrete series of $G$, but
in contrast to the situation for $G$, it is not true in general
that all discrete series are cuspidal. Our main result
determines exactly which discrete series representations
for $\SO(p,q+1)_e/\SO(p,q)_e$ are cuspidal. If $p\ge q-1$, all
discrete series representations
are cuspidal, but if $p<q-1$, there is a non-empty and finite family
of non-cuspidal discrete series.

The notion of cuspidality relates to integral geometry on the
symmetric space by using integration over a certain unipotent
subgroup $N^*\subset G$. The definition of $N^*$ is given
in Section \ref{general theory}.
The map $f\mapsto \int_{N^*} f(\cdot nH)\,dn$,
which maps functions on $G/H$ to functions on $G/N^*$,
is a kind of {\it Radon transform} for $G/H$.
A discrete series subspace of $L^2(G/H)$
is said to be {\it cuspidal} if it is annihilated
by this transform (assuming the convergence of the integral
on an appropriate dense subspace of $L^2(G/H)$).
In the group case $G\simeq G\times G/G$,
we have $N^*=N\times\{e\}$, where $N$ corresponds to a
minimal parabolic subgroup of $G$, and thus the Radon transform
of a function $f$ on $G$ is the function $\int_N f(xny^{-1})\,dn$
on $G\times G$. It follows that the annihilation by this
transform agrees with Harish-Chandra's cuspidality condition
for the minimal parabolic subgroup.

It is clear that certain discrete series for $G/H$, which are spherical
(that is, they contain the trivial $K$-type),
cannot be cuspidal since they contain functions
taking only positive values.
Obviously a positive function cannot be annihilated by
integration over any subgroup $N^*\subset G$.
The present investigation shows that for the hyperbolic spaces
all spherical discrete series are non-cuspidal, but also
that in general there exist non-cuspidal, non-spherical
discrete series. The non-spherical non-cuspidal discrete series
are given by odd functions on the real hyperbolic space,
which means that they do not descend to functions on the
projective hyperbolic space.

The first section of the paper describes in more detail
the suggested program for
general symmetric spaces and motivates our study of the
hyperbolic spaces. The hyperbolic spaces are treated
in the following sections. Apart from the motivation,
this treatment is to a large extend independent of the general theory.
The definition of $N^*$ and the generalized notion of cuspidality
was introduced by the second author in lectures at
Oberwolfach (2001).
The results presented in this paper were announced in
\cite{OW}.

\bigskip

\section{A general notion of cuspidality}\label{general theory}

We first recall from Harish-Chandra \cite{HC} the notion of cuspidality
for $G$ and its relation to the Plancherel decomposition.
Let $G$ be a connected semisimple real Lie group with finite center
(or more generally, reductive of Harish-Chandra's class),
and let $K\subset G$ be a maximal compact subgroup with
corresponding Cartan involution $\theta$. Let
$\fg=\fk\oplus\fp$ denote the
corresponding decomposition of the Lie algebra,
and let $\fa\subset\fp$ be a maximal abelian subspace.

Let $\cC(G)$ denote the Schwartz space for $G$, which is
dense in $L^2(G)$.
By definition, a {\it cusp form} on $G$ is a function $f\in\cC(G)$
such that
\begin{equation}\label{HC-cusp}
\int_N f(xny)\,dn=0,
\end{equation}
for all parabolic subgroups $P=MAN\subsetneq G$,
and all $x,y\in G$ (the integral converges absolutely for all
$f\in\cC(G)$). Let $L^2_{\rm ds}(G)$ denote the sum of all
the discrete series representations in $L^2(G)$. It is both left and
right invariant, and the intersection
$\cC_{\rm ds}(G)=L^2_{\rm ds}(G)\cap\cC(G)$ is a dense
subspace.

\begin{Thm}[Harish-Chandra] $\cC_{\rm ds}(G)$
is exactly the space of cusp forms. It is non-zero if and only if
$G$ and $K$ have equal rank.
\end{Thm}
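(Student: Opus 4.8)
The plan is to split the statement into two essentially independent assertions: the identity $\cC_{\rm ds}(G)=\{\text{cusp forms}\}$, and the nonvanishing criterion $\cC_{\rm ds}(G)\neq 0$ if and only if $\mathrm{rank}\,G=\mathrm{rank}\,K$. For the first assertion I would prove the two inclusions separately, both resting on Harish-Chandra's theory of constant terms (the asymptotic expansions of matrix coefficients along parabolic subgroups) together with the Plancherel decomposition of $L^2(G)$.

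First I would show that every $f\in\cC_{\rm ds}(G)$ is a cusp form. Writing $f$ as a finite sum of $K$-finite matrix coefficients of discrete series representations, it suffices to treat a single such coefficient. For a proper parabolic $P=MAN\subsetneq G$ the integral $\int_N f(xny)\,dn$ is, up to normalization, the constant term $f_P$ of $f$ along $P$. The asymptotic theory expresses the behavior of $f(x\,a\,y)$ as $a\to\infty$ inside the positive chamber of $A$ in terms of $f_P$; square-integrability of a discrete series coefficient forces genuine exponential decay in every such direction, which is incompatible with a nonzero constant term, since the leading exponents of $f_P$ would otherwise reappear in the expansion of $f$. Hence $f_P=0$ for every proper $P$, so (\ref{HC-cusp}) holds and $f$ is a cusp form.

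Conversely I would show that a cusp form lies in $\cC_{\rm ds}(G)$. The condition (\ref{HC-cusp}) says precisely that $f$ is orthogonal, in the two-sided regular representation, to every matrix coefficient of a representation unitarily induced from a proper parabolic; equivalently, $f$ is orthogonal to the range of every Eisenstein integral attached to a proper $P$. By the Plancherel theorem the orthogonal complement of $L^2_{\rm ds}(G)$ in $L^2(G)$ is exactly the closed span of the wave packets assembled from such Eisenstein integrals, so a cusp form must already lie in $L^2_{\rm ds}(G)$, and being Schwartz it lies in $\cC_{\rm ds}(G)$. This establishes the first assertion.

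For the rank criterion I would argue in both directions. If $\mathrm{rank}\,G>\mathrm{rank}\,K$ there is no compact Cartan subgroup, and analyzing the Fourier transforms of orbital integrals, equivalently the invariant eigendistributions that can arise as characters, shows that no $G$-invariant eigendistribution can carry the square-integrability required of a discrete series character, whence $\cC_{\rm ds}(G)=0$. The converse, that equal rank forces discrete series to exist, is the deep part and I expect it to be the main obstacle: here one fixes a compact Cartan subgroup $T\subset K$ and, for each regular character of $T$, constructs the associated discrete series by inverting Harish-Chandra's explicit formula for the discrete contribution to the Plancherel measure (the Fourier transform of the orbital integral over $T$). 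Producing an actual square-integrable representation, rather than merely a candidate character, is where the full weight of Harish-Chandra's analysis enters, and I would treat it as the technical heart of the theorem.
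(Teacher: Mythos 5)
This theorem is not proved in the paper at all: it is quoted as background, attributed to Harish-Chandra and covered by the reference \cite{HC}, and the authors use it only as motivation for their definition of cuspidality on $G/H$. So there is no proof in the paper to compare yours against; the only question is whether your sketch would stand on its own, and as written it does not. It is the standard roadmap, but every one of its load-bearing steps is itself a major theorem that you invoke rather than prove. The assertion that square-integrability of a discrete series coefficient ``is incompatible with a nonzero constant term, since the leading exponents of $f_P$ would otherwise reappear in the expansion of $f$'' presupposes the entire theory of the constant term: its existence, the convergence of $\int_N f(xny)\,dn$ on the Schwartz space, and the precise matching between the exponents of $f_P$ and the asymptotics of $f$ along the chamber. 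The converse inclusion, as you set it up, rests on the Plancherel theorem in the form ``the orthocomplement of $L^2_{\rm ds}(G)$ is spanned by wave packets of Eisenstein integrals from proper parabolics,'' but in Harish-Chandra's development that statement is not available independently of the theory of cusp forms --- the continuous spectrum is built from cusp forms on the Levi factors $M$ of cuspidal parabolics --- so a proof along these lines must either take care to avoid circularity or explicitly assume the finished Plancherel decomposition as input.

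The rank criterion has the same character. The ``only if'' direction (no compact Cartan implies no discrete series) requires the regularity theorem for invariant eigendistributions and the analysis of characters restricted to the various Cartan subgroups, which you gesture at but do not supply. For the ``if'' direction you candidly state that producing an actual square-integrable representation for each regular character of $T$ ``is where the full weight of Harish-Chandra's analysis enters'' and that you would ``treat it as the technical heart.'' That is an accurate description of the difficulty, but naming the obstacle is not overcoming it. In short: your outline correctly identifies the architecture of the classical proof and would serve well as an exposition of why the theorem is true, but it is not a proof, and for the purposes of this paper none is needed --- a citation to \cite{HC} is what the authors intend.
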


The Plancherel decomposition
splits $L^2(G)$ into a finite sum of {\it series},
each of which is related to a particular
cuspidal parabolic subgroup $P=MAN$
(that is, ${\rm rank}\, M={\rm rank}\, M\cap K$).
The splitting can be accomplished as follows.

Let $\fh_1,\dots,\fh_r$ be a complete (up to conjugation)
set of $\theta$-stable Cartan subalgebras in $\fg$, and let
$\fa_i=\fh_i\cap\fp$ for $i=1,\dots,r$. For each $i=1,\dots,r$,
let $P_i$ be a parabolic subgroup with Langlands decomposition
$M_iA_iN_i$ such that $A_i=\exp\fa_i$.
We can arrange that $\fa_1=\fa$, then $P_1$ is a minimal
parabolic subgroup.

We now define $\cC_i(G)\subset \cC(G)$ as the set of
functions $f\in\cC(G)$ for which
\begin{itemize}
\item $f$ is orthogonal to all $h\in\cC(G)$ with
$\int_{N_i} h(xny)\,dn=0$, for all $x,y\in G$.
\item $\int_{N} f(xny)=0$, for all $x,y\in G$, for all
cuspidal parabolic subgroups
some conjugate of which is properly contained in $P_i$.
\end{itemize}

In particular, for $i=1$ the second condition is vacuous,
and $\cC_1(G)$, which is called the {\it most-continuous}
series, is just the orthocomplement of space of
functions annihilated by all integrals
$\int_{N_1} g(xny)\,dn$.
On the other hand, for ${\rm rank}\, G={\rm rank}\, K$,
we can arrange that $\fa_r=\{0\}$ and $N_r=\{e\}$.
Then for $i=r$ the first
condition is vacuous, and $\cC_r(G)$ is
the space $\cC_{\rm ds}(G)$ of cusp forms.

\begin{Thm}[Harish-Chandra] The following is an orthogonal direct sum
$$\cC(G)=\oplus_{i=1}^r \,\cC_i(G).$$
\end{Thm}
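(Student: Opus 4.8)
The plan is to deduce the decomposition from Harish-Chandra's Plancherel theorem for $\cC(G)$ and then to match the two conditions defining $\cC_i(G)$ with the resulting Plancherel pieces. Recall that the Plancherel theorem decomposes $\cC(G)$ as an orthogonal sum $\cC(G)=\bigoplus_{[P]}\cC_{[P]}(G)$ indexed by association classes $[P]$ of cuspidal parabolic subgroups, where $\cC_{[P]}(G)$ is the Schwartz span of the wave packets formed from the Eisenstein integrals $E(P:\psi:\nu)$ attached to the discrete series of the Levi factor $M$ of $P=MAN$. The choice of $\theta$-stable Cartan subalgebras $\fh_1,\dots,\fh_r$ and the associated parabolics $P_i$ with $A_i=\exp(\fh_i\cap\fp)$ realizes exactly one representative in each association class, so it suffices to prove that $\cC_i(G)=\cC_{[P_i]}(G)$ for every $i$.

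The key is to interpret the transform $f\mapsto\int_N f(xny)\,dn$ through Harish-Chandra's theory of the constant term. For a cuspidal parabolic $P=MAN$ this integral is, up to normalization, the constant term $f^{(P)}$, and the explicit computation of constant terms of Eisenstein integrals shows that $f^{(P)}=0$ precisely when the wave-packet expansion of $f$ involves only series attached to parabolics whose split component does not contain a conjugate of $A$. Writing $\mathcal{K}_i\subset\cC(G)$ for the kernel of the $N_i$-transform, the first defining condition states that $f\in\mathcal{K}_i^{\perp}$, which by the above restricts $f$ to the sum of those pieces $\cC_{[Q]}(G)$ with $A_Q$ conjugate into $A_i$. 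The second condition, the vanishing of $\int_N f(xny)\,dn$ for every cuspidal parabolic properly contained up to conjugacy in $P_i$, removes all pieces attached to parabolics strictly smaller than $P_i$. What survives is exactly the single piece $\cC_{[P_i]}(G)$.

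Granting this identification, the theorem follows: mutual orthogonality of the $\cC_i(G)$ is inherited from the orthogonality of the Plancherel pieces, which in turn reflects the disjointness of the representations occurring in distinct series, and the equality $\cC(G)=\bigoplus_{i=1}^r\cC_i(G)$ is the completeness of the Plancherel decomposition restricted to the Schwartz space. The transitivity of the constant term, namely that the constant term of $f^{(P)}$ along a parabolic of $M$ equals an appropriate constant term of $f$, is what guarantees that the ordering by proper containment correctly separates the classes and that the second condition is consistent with, rather than redundant to, the first.

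The main obstacle is the precise matching in the middle step: identifying $\mathcal{K}_i^{\perp}$, cut down by the lower vanishing conditions, with a single association class. This rests on the deepest parts of Harish-Chandra's work, namely the explicit constant terms and Maass--Selberg relations for Eisenstein integrals and the theorem that the wave-packet map is a topological isomorphism onto $\cC_{[P]}(G)$. One must check carefully that the ``properly contained'' relation among cuspidal parabolics matches inclusion of split components up to conjugacy, so that the two conditions together isolate $[P_i]$ without over- or under-counting; verifying the boundary cases $i=1$ (the most-continuous series, where the second condition is vacuous) and $i=r$ (the cusp forms, where the first is vacuous) serves as a consistency check on the general argument.
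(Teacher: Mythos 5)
This theorem is stated in the paper purely as background, attributed to Harish-Chandra and cited from \cite{HC}; the paper offers no proof of it, so there is no argument of the authors' to compare yours against. Judged on its own terms, your proposal is the standard roadmap --- reduce to the Plancherel decomposition of $\cC(G)$ into association classes $[P]$ of cuspidal parabolics, interpret the $N_i$-integral as the constant term, and show $\cC_i(G)=\cC_{[P_i]}(G)$ --- and the boundary checks for $i=1$ and $i=r$ are correct. But as a proof it has a genuine gap: the middle identification, that $\mathcal{K}_i^{\perp}$ intersected with the lower vanishing conditions is exactly the single class $\cC_{[P_i]}(G)$, is the entire content of the theorem, and you establish it only by asserting that ``the explicit computation of constant terms of Eisenstein integrals shows'' the relevant vanishing criterion. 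That criterion --- precisely which wave packets have vanishing constant term along a given cuspidal parabolic, and how the partial order by proper containment of parabolics interacts with conjugacy of split components --- is where all the analytic work (Maass--Selberg relations, the wave-packet isomorphism, transitivity of the constant term) actually enters, and you defer it wholesale to the literature. You also need the convergence and continuity of $f\mapsto\int_{N_i}f(x n y)\,dn$ on $\cC(G)$ so that $\mathcal{K}_i$ is a closed subspace and the orthocomplement in the first defining condition is meaningful; this is used silently.

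In short: the outline is faithful to how the result is actually proved, but every step that could fail is outsourced. If the intent is a self-contained proof, the statement to isolate and prove is: for cuspidal parabolics $P=MAN$ and $Q$, the constant term along $P$ of a wave packet in $\cC_{[Q]}(G)$ vanishes identically if and only if no conjugate of $A$ is contained in $A_Q$. Everything else in your argument is bookkeeping once that is in hand.
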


In Harish-Chandra's Plancherel decomposition, each piece
$\cC_i(G)$ (or its closure in $L^2(G)$) is further decomposed
into generalized principal series representations induced from $P_i$.

Let now $G/H$ be a semisimple symmetric space,
that is, the homogeneous space of $G$ with
a subgroup $H$ satisfying
$G^\sigma_e\subset H\subset G^\sigma$, where
$\sigma\colon G\to G$ is an involution, $G^\sigma$ the group of
its fixed points, and $G^\sigma_e$ the identity component of this
group. The problem of obtaining the Plancherel decomposition for $L^2(G/H)$
has been solved (see the references cited in the introduction).
In general terms the outcome is similar to what was described above for
$L^2(G)$. In particular, discrete series occur if and only if
$G/H$ and $K/K\cap H$ have equal rank.

One can also define a Schwartz space $\cC(G/H)$ for $G/H$,
and again (see \cite{B-SI}, Theorem 23.1) there is a finite decomposition
\begin{equation}\label{decomposition of C}
\cC(G/H)=\oplus_i\,\cC_i(G/H)
\end{equation}
where each piece decomposes as a direct integral of
representations induced from a particular parabolic subgroup.
However, the pieces in this decomposition are defined
representation theoretically.
The motivation behind this paper was to study the following
problem.

\smallskip
\noindent{\bf Question:} {\it Is there a description of the $\cC_i(G/H)$
through integrals over subgroups $N$ (or $N/N\cap H$), similar to that
for $G$? In particular, can the discrete
series be characterized through some
reasonable definition of cusp forms?}

\medskip
Recall that a {\it minimal} $\sigma\theta$-stable parabolic subgroup
$P_{\sigma-\rm min}$ is obtained as follows. Let $\fg=\fh\oplus\fq$ be the
decomposition according to $\sigma$. We
may assume that $\sigma$ and $\theta$ commute, and can arrange
that $\fa$ is $\sigma$-invariant and that
$\fa_q:=\fa\cap\fq$ is maximal abelian in $\fp\cap\fq$.
The set $\Sigma$ of non-zero weights of $\fa_q$ in $\fg$
is a root system, and $P_{\sigma-\rm min}=MA_qN$ is
determined from a choice $\Sigma^+$ of positive roots. Here
$A_q=\exp\fa_q$, and $MA_q$ is its centralizer. The
{\it most-continuous series} for $G/H$, which is a basic
summand in (\ref{decomposition of C}),
is induced from a
parabolic subgroup of this form (see \cite{BSmc}).
More generally, the representations in
$\cC_i(G/H)$ are induced from a (not necessarily minimal)
$\sigma\theta$-stable parabolic subgroup $P_i$
(see \cite{B-SII}, Theorem 10.9).

It would be tempting
to apply the unipotent radicals $N_i$ of the $P_i$
in a definition of cusp forms on $G/H$:
\begin{equation}\label{cusp}
\int_{N_i} f(gnH)\,dn=0\quad (g\in G, P_i\neq G).
\end{equation}
In the group case, where $G$ is considered as
a symmetric space for $G\times G$, the $\sigma\theta$-stable
parabolic subgroups of $G\times G$ are of the form $P\times \theta(P)$, where
$P\subset G$ is parabolic, and thus the
integral (\ref{cusp}) becomes an integral over both $N$ and
$\theta(N)$. Hence (\ref{cusp}) differs substantially from
Harish-Chandra's definition (\ref{HC-cusp}) in this case.
Furthermore, although (\ref{cusp}) does converge
in the group case (see \cite{Wallach}, Lemma 15.8.1),
this is not the case for general symmetric spaces.
An example is provided below in Lemma \ref{diverging integrals}
(see however \cite{Kr} for the special case of $L^1$-discrete series
for $G/H$).

Based on these observations one is lead to look for
integrals over different subgroups, and
the following approach was suggested by the second author.
We first fix a system of
positive roots for $\fa$ in $\fg$,
such that $\Sigma^+$ consists of the non-zero restrictions
to $\fa_q$. Since $\Sigma^+$ was already given, this
only amounts to a choice of
positive roots for the root system of {\it pure
$\fa_h$-roots}, that is, the roots of $\fa$
which vanish on $\fa_q$.
On $\fa_q$ an ordering  is determined by $\Sigma^+$.
On $\fa_h$ we choose an ordering which is compatible
with the positive pure roots.
More precisely, these orderings
can be attained by choosing elements $X_q\in\fa_q$
and $X_h\in\fa_h$ such that $\alpha(X_q)>0$ for all
$\alpha\in\Sigma^+$, and $\beta(X_h)>0$
for all positive pure $\fa_h$-roots $\beta$.
Furthermore, we request of $X_h$ that $\alpha(X_h)\neq 0$
for all roots of $\fa$ with non-zero $\fa_h$-restriction.
Then $X_q$ and $X_h$
determine the corresponding notions of positivity
for elements in $\fa_q^*$ and $\fa_h^*$. Notice that
the notion which results from the choice of
$X_h$ is in general not unique.

We now define the following subspaces (in fact subalgebras) of the Lie algebra
$\fn$ of $N$:

\smallskip
$\fn_+=\sum_\beta\fg^\beta$, where
$\beta$ is a root  with $\beta|_{\fa_q}>0$
and $\beta|_{\fa_h}>0$.

$\fn_-=\sigma\theta(\fn_+)=\sum_\beta\fg^\beta$, where
$\beta$ is a root  with $\beta|_{\fa_q}>0$
and $\beta|_{\fa_h}<0$.

$\fn_0=\sum_\beta\fg^\beta$, where
$\beta$ is a root  with $\beta|_{\fa_q}>0$
and $\beta|_{\fa_h}=0$.

\smallskip

\noindent Then $\fn=\fn_+\oplus\fn_0\oplus\fn_-$, and
\begin{equation}\label{defin*}
\fn^*:=\fn_+\oplus\fn_0=\sum_{\beta|_{\fa_q}>0,\, \beta|_{\fa_h}\geq0}\fg^\beta
\end{equation}
is a subalgebra. Let $\fn^{**}= \fn_-$ such that
$\fn=\fn^*\oplus \fn^{**}$. Similarly, let $N^*=\exp(\fn^*)$
and $N^{**}=\exp(\fn^{**})$, then $N=N^*N^{**}$.
Notice that $N^*$ intersects trivially with $H$,
since this is the case already for $N$.

The suggestion is to replace $N$ by $N^*$ in (\ref{cusp}) and
consider integrals of the form
\begin{equation}
\label{newcusp}
\int_{N^*} f(gn^*H)\,dn^*
\end{equation}
in a possible definition of cusp forms on $G/H$.
It is easily seen that in the group case, the
integrals (\ref{newcusp}) amount
exactly to those in Harish-Chandra's
original integral (\ref{HC-cusp})
for minimal parabolic subgroups.

It is useful also to view $N^*$ as a quotient
of the nilpotent part of a particular
minimal parabolic subgroup $P_1$
of $G$. For this purpose we define
\begin{equation}\label{n1}
\fn_1=\fn_+\oplus\fn_0\oplus\theta(\fn_-)\oplus\sum_\beta\fg^\beta,
\end{equation}
with the final summation over the positive pure
$\fa_h$-roots.
It follows from the maximality of $\fa_q$, that
the sum $\sum_\beta\fg^\beta$ in (\ref{n1})
is contained in $\fh$. Using this and the fact that
$\sigma\colon\fn_+\to\theta(\fn_-)$ is bijective,
one concludes easily for the Lie algebra
$$\fn_1=\fn^*\oplus(\fn_1\cap\fh).$$
Let $N_1=\exp(\fn_1)$, then the following holds
similarly.

\begin{Lemma} The mapping $(n_1,n_2)\mapsto n_1n_2$
is a diffeomorphism of $N^*\times (N_1\cap H)$ onto $N_1$.
\end{Lemma}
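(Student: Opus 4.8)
The plan is to establish that the multiplication map $\mu\colon N^*\times(N_1\cap H)\to N_1$, $(n_1,n_2)\mapsto n_1n_2$, is a diffeomorphism by exploiting the already-established direct-sum decomposition $\fn_1=\fn^*\oplus(\fn_1\cap\fh)$ at the Lie-algebra level, together with the fact that all groups involved are simply connected nilpotent Lie groups. First I would note that $N^*=\exp(\fn^*)$ and $N_1\cap H=\exp(\fn_1\cap\fh)$ are closed connected subgroups of the simply connected nilpotent group $N_1=\exp(\fn_1)$, since $\fn^*$ and $\fn_1\cap\fh$ are subalgebras of $\fn_1$ (that $\fn_1\cap\fh$ is a subalgebra follows from $\fh$ being a subalgebra; that $\fn^*$ is one was noted in the text around (\ref{defin*})). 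On such groups the exponential map is a diffeomorphism, so the two factors are themselves simply connected nilpotent, and $\mu$ is automatically smooth.

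\smallskip

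For the core argument I would appeal to the standard fact that if $\fn_1=\fa\oplus\fb$ is a vector-space direct sum of a nilpotent Lie algebra into two subalgebras, then $\exp\fa\cdot\exp\fb=\exp\fn_1$ with uniqueness of the factorization, i.e.\ the product map is a diffeomorphism onto the group. The cleanest way to see this is via the Baker--Campbell--Hausdorff formula: for $X\in\fn^*$ and $Y\in\fn_1\cap\fh$ one has $\exp X\exp Y=\exp\bigl(X+Y+\tfrac12[X,Y]+\cdots\bigr)$, a finite sum since $\fn_1$ is nilpotent, and this identifies $\mu$ in exponential coordinates with a polynomial map of the form $(X,Y)\mapsto X+Y+(\text{higher brackets})$. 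Because the linear term is precisely the isomorphism $\fn^*\oplus(\fn_1\cap\fh)\xrightarrow{\sim}\fn_1$ coming from the Lie-algebra decomposition, this polynomial map is a diffeomorphism: it is invertible by a triangular (filtration) argument on the lower central series, where at each graded level the map reduces to the identity on $\fn^*\oplus(\fn_1\cap\fh)$ and the higher-bracket corrections depend only on strictly earlier levels and can be solved away recursively.

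\smallskip

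Concretely, I would carry out the inversion by descending induction along the lower central series $\fn_1\supset[\fn_1,\fn_1]\supset\cdots\supset\{0\}$: the quotient by $[\fn_1,\fn_1]$ kills all brackets, reducing $\mu$ to the linear isomorphism $X+Y$, which gives the factorization of any element of $N_1$ modulo the derived group; one then lifts successively through the filtration, at each stage correcting by an element of the next subquotient, the correction being uniquely determined since the bracket terms are already fixed by the previously determined components. This yields both surjectivity and injectivity of $\mu$, and since $\mu$ is smooth with everywhere-invertible differential (the differential at the identity is the Lie-algebra direct-sum isomorphism, and left/right translations are diffeomorphisms), it is a diffeomorphism.

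\smallskip

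The main obstacle I anticipate is purely a matter of bookkeeping rather than substance: one must verify that $\fn_1\cap\fh$ really is a subalgebra complementary to $\fn^*$ inside $\fn_1$, so that the BCH product lands in and fills out $N_1$ with no overlap. This is exactly the content of the algebraic decomposition $\fn_1=\fn^*\oplus(\fn_1\cap\fh)$ asserted just before the statement, which I may assume; once that is in hand, the diffeomorphism property is the standard nilpotent-group fact above and requires no genuinely new input. (The phrase ``holds similarly'' in the text signals that the same BCH/filtration argument used implicitly for the earlier factorizations of $N$ applies verbatim.)
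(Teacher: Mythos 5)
Your overall route is genuinely different from the paper's, and the fact you invoke is true: a vector-space decomposition $\fn_1=\mathfrak a\oplus\mathfrak b$ of a nilpotent Lie algebra into two subalgebras does exponentiate to a diffeomorphism $\exp\mathfrak a\times\exp\mathfrak b\to N_1$, so if you simply cite that theorem the lemma follows. The paper avoids it entirely: it gets injectivity from $N^*\cap H=\{e\}$, gets a diffeomorphism onto an \emph{open} subset $N^*(N_1\cap H)$ from Helgason's lemma on subgroups with complementary Lie algebras, and then proves surjectivity by a contraction argument specific to this setting --- conjugation by $a_{-t}=\exp(-tX_h)$ drives any $n_1\in N_1$ toward $N_0$, hence into the open set $N^*(N_1\cap H)$ for large $t$, and since $a_t$ normalizes both factors this pulls $n_1$ itself into the product. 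Your approach buys generality and avoids the symmetric-space structure; the paper's buys elementarity, needing only a local statement plus the $\fa_h$-grading.

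The gap is in your justification of the standard fact. The descending induction along the lower central series tacitly assumes that the splitting $\fn_1=\fn^*\oplus(\fn_1\cap\fh)$ is compatible with the filtration $\fn_1\supset[\fn_1,\fn_1]\supset\cdots$, i.e.\ that ``at each graded level the map reduces to the identity on $\fn^*\oplus(\fn_1\cap\fh)$.'' That is false in general: the images of two complementary subalgebras in $\fn_1/[\fn_1,\fn_1]$ need not be complementary. Concretely, take $\mathfrak n=\mathrm{span}(P,Q,Z,W)$ with $[P,Q]=Z$ and $Z,W$ central, and $\mathfrak a=\mathrm{span}(P,W)$, $\mathfrak b=\mathrm{span}(Q,Z-W)$; these are complementary subalgebras, yet their images in $\mathfrak n/\mathrm{span}(Z)$ both contain $\bar W$, so your base step --- unique factorization modulo the derived group --- already fails, and the ``corrections'' at the next level are not uniquely determined by the previous ones. (The theorem itself survives; a correct general proof runs, e.g., via the Kostant--Rosenlicht orbit argument: the orbit $AB$ of $e$ under $(a,b)\cdot n=anb^{-1}$ is open since $\mathfrak a+\mathfrak b=\fn_1$ and Zariski closed since unipotent orbits in affine varieties are closed, hence equals $N_1$, with trivial stabilizer since $A\cap B=\{e\}$.) In the particular situation of the lemma your filtration idea can be repaired by grading $\fn_1$ by $\fa_h$-weights, which both $\fn^*$ and $\fn_1\cap\fh$ respect --- but that repair is essentially the paper's $\exp(tX_h)$-contraction in disguise, and as written your surjectivity argument does not go through.
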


\begin{proof}
The map $(n_1,n_2)\mapsto n_1n_2$ is clearly injective.
By \cite{Helgason DGLSS}, Lemma VI 5.2,
it is a diffeomorphism onto an open subset
$N^* (N_1\cap H)$ of $N_1$ containing $N_0$.
Let $a_t = \exp(t X_h)$. Then $\lim_{t \mapsto \infty} (a_{-t} n_1 a_t) \in N_0$
for all $n_1 \in N_1$, whence $a_{-t} n_1 a_t \in N^* (N_1\cap H)$ for $t$ sufficiently large.
Since both $N^*$ and $(N_1\cap H)$ are normalized by $a_t$, it follows that $n_1 \in N^* (N_1\cap H)$.
Therefore $N_1 = N^* (N_1\cap H)$, and the result follows.
\qed\end{proof}

We thus have
$$N^* \simeq N_1/N_1\cap H.$$
The construction of $N^*$ is motivated
by the observation that
it is the smallest subgroup of $N$ which can be realized
as a quotient of this form for some minimal parabolic subgroup
of $G$ containing $A$.

For a function $f$ on $G/H$, we define its Radon transform
$Rf$ by
\begin{equation}
\label{R-def}
R f(g) := \int _{N^*} f(gn^*H) \,dn^*
=\int_{N_1/N_1\cap H} f(gn_1H) \,d\dot n_1,\qquad (g\in G),
\end{equation}
provided the integral converges.

Let $\cC(G/H)$ denote the Schwartz space for $G/H$
(see \cite{vdB1992}). At the time of the present research
we expected that (\ref{R-def})  would converge for
all $f \in \cC(G/H)$. However, recently it has been suggested
by van den Ban and Kuit (see \cite{BanKuit}, \cite{BanKuitS})
that the definition of $N^*$ needs to be refined. More precisely,
one needs in addition that the
element $X_h$ is so chosen, that the roots of $\fn_+$ have
non-negative inner product with the sum of the positive
pure $\fa_h$-roots (this can always be attained).
For the hyperbolic spaces investigated in the present paper,
this extra condition is always fulfilled.

Assuming  that $Rf$ is well defined for $f\in\cC(G/H)$,
one can define the cuspidal discrete series for $G/H$
to consist of those discrete series for which the corresponding functions
in $\cC(G/H)$ are annihilated by $R$.

We shall need a result about the relation between $R$ and
invariant differential operators on $G/H$.
We let $P_{\sigma-\rm min}=MA_qN$ be as above.
Since $$\fg=\fn\oplus(\fm\cap\fq)\oplus\fa_q\oplus\fh,$$ we can define
a map $$\mu: \D(G/H)\to\D(M/M\cap H)\otimes \D(A_q)$$
by $\mu(D)=T(D_0)$, where
\begin{equation}\label{u-u0}
u-u_0\in \fn\cU(\fg)+\cU(\fg)\fh,
\end{equation}
and $u\in\cU(\fg)^H, u_0\in\cU(\fm)^{M\cap H}\otimes\cU(\fa_q)$
are elements that represent $D$ and $D_0$, and where
$T(D_0)=a^{-\rho}D_0\circ a^{\rho}$
(see for example \cite{vdB1992}, p. 341). The map is independent of the
choice of positive system for $\fa_q$.

Let $\fm_{nc}$ be the ideal in $\fm$ generated by
$\fm\cap\fp$. It follows from
maximality of $\fa_q$ that $\fm_{nc}\subset\fh$.
The complementary ideal $\fm_c$
is contained in $\fk$ and centralizes $\fa$. Let
$M_c\subset M$ be the corresponding analytic subgroup.
Using the decomposition
\begin{equation}\label{dec m}
\fm=\fm_c\oplus\fm_{nc},
\end{equation}
and the fact that $\fm_{nc}\subset\fh$, we see that
$$\D(M/M\cap H)\simeq\D(M_c/M_c\cap H).$$
Therefore, we may as well regard $\mu$ as a map
$$\mu: \D(G/H)\to\D(M_c/M_c\cap H)\otimes \D(A_q).$$

We denote by $\rho$, $\rho^*$, $\rho^{**}$, $\rho_1\in\fa^*$
half the sum of the roots of $\fn$, $\fn^*$, $\fn^{**}$,
and $\fn_1$ respectively (with multiplicities).
Then $\rho=\rho^*+\rho^{**}$ and
$$\rho_1|_{\fa_q}=(\rho^*-\rho^{**})|_{\fa_q}.$$
Let $f$ be a smooth function on $G/H$, such that
the defining integral of $Rf$ allows the application of right
derivatives by all elements from $\cU(\fg)$.

\begin{Lemma}\label{Af eigenfunction}
Let
\begin{equation}
\cA f(ma) := a^{\rho_1} Rf(ma),
\end{equation}
for $m\in M_c$ and $a\in A_q$.
Then
$$
\cA(Df)=\mu(D)\cA f,
$$
for $D\in\D(G/H)$.
\end{Lemma}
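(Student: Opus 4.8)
The statement asserts that the normalized Radon transform $\cA$ intertwines the action of an invariant differential operator $D$ on $G/H$ with the action of its image $\mu(D)$ on $M_c/M_c\cap H \times A_q$. The natural strategy is to trace through the definition of $\mu(D)$ and show that each of the three ingredients in the decomposition $\fg=\fn\oplus(\fm\cap\fq)\oplus\fa_q\oplus\fh$ behaves correctly under the integral over $N^*$ and the normalization by $a^{\rho_1}$.

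Let me think about how to prove this.

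**The plan.**

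The plan is to reduce the identity $\cA(Df)=\mu(D)\cA f$ to a statement about the representative $u\in\cU(\fg)^H$ of $D$ and its counterpart $u_0\in\cU(\fm)^{M\cap H}\otimes\cU(\fa_q)$, using the decomposition \eqref{u-u0}, namely $u-u_0\in\fn\cU(\fg)+\cU(\fg)\fh$. First I would express $Df(g)$ as the right-derivative $(R_u f)(g)$ acting in the $\cU(\fg)^H$ realization, valid because $u$ represents $D$ and $f$ is $H$-invariant on the right. The key reduction is then: when we integrate $R_u f$ over $N^*$ and evaluate at $ma$ with $m\in M_c$, $a\in A_q$, we may replace $u$ by $u_0$. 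This requires two separate vanishing arguments corresponding to the two terms in \eqref{u-u0}. The term in $\cU(\fg)\fh$ contributes nothing because $f$ is right-$H$-invariant (right derivatives by $\fh$ annihilate it). The term in $\fn\cU(\fg)$ is the genuine content: I would show that integration over $N^*$ kills the right-derivative by any $X\in\fn$, up to the correction bookkept by the factor $a^{\rho_1}$.

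**The main step.**

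Here is where I would concentrate the work. Write $X\in\fn$ as $X^*+X^{**}$ according to $\fn=\fn^*\oplus\fn^{**}$. For $X^*\in\fn^*$, since $N^*=\exp(\fn^*)$ is the group over which we integrate, the right derivative $R_{X^*}$ applied inside $\int_{N^*} f(gn^*H)\,dn^*$ becomes a total derivative along $N^*$; because $N^*$ is unimodular and the integrand is Schwartz (or lies in the dense subspace where convergence and differentiation under the integral are justified, as hypothesized), this integrates to zero. The subtler piece is $X^{**}\in\fn^{**}=\fn_-$. Here I would use the conjugation relation: for $a\in A_q$, $\mathrm{Ad}(a^{-1})X^{**}=a^{-\beta}X^{**}$ on each root space, and one uses the commutation of $R_{X^{**}}$ with the $A_q$-translation together with the fact that $\sigma\theta(\fn_+)=\fn_-$ to transfer $X^{**}$-derivatives into $\fn^*$-derivatives modulo $\fh$. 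This is exactly where the relation $\rho_1|_{\fa_q}=(\rho^*-\rho^{**})|_{\fa_q}$ enters: the modular/normalization factor $a^{\rho_1}$ is precisely the one that makes the $A_q$-conjugation of the integration measure and the root-space scalings cancel, producing the operator $T(D_0)=a^{-\rho}D_0\circ a^{\rho}$ rather than $D_0$ itself.

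**Assembling and the expected obstacle.**

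Once the reduction $\int_{N^*}(R_u f)(man^*H)\,dn^* = \int_{N^*}(R_{u_0} f)(man^*H)\,dn^*$ is established, I would observe that $u_0\in\cU(\fm)^{M\cap H}\otimes\cU(\fa_q)$ acts only in the $M$ and $A_q$ directions, which commute with the $N^*$-integration and with the evaluation at $m\in M_c$. Passing through the identification $\D(M/M\cap H)\simeq\D(M_c/M_c\cap H)$ (valid since $\fm_{nc}\subset\fh$), the action of $u_0$ becomes precisely $\mu(D)=T(D_0)$ after the conjugation by $a^{\rho_1}$ built into $\cA$; combining the normalizations $a^\rho$ from $T$ with $a^{\rho_1}$ and $\rho=\rho^*+\rho^{**}$ yields the clean statement $\cA(Df)=\mu(D)\cA f$. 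The main obstacle I anticipate is the careful handling of the $\fn_-=\fn^{**}$ contribution: unlike the $\fn^*$ part, one cannot simply integrate it away, and the bookkeeping of root-space weights against the exponents $\rho^*$, $\rho^{**}$, and $\rho_1$ must be done with care to confirm that the correction terms organize exactly into the prescribed normalization $a^{\rho_1}$ and the twist $T$. A secondary technical point is justifying differentiation under the integral sign and the vanishing of boundary terms, which relies on the standing hypothesis that $f$ lies in the dense subspace where $Rf$ admits all right $\cU(\fg)$-derivatives.
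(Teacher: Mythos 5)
Your overall architecture matches the paper's: pass to the representative $u$ of $D$, kill the $\cU(\fg)\fh$-part of $u-u_0$ by right $H$-invariance, kill left factors from $\fn^*$ by integration over the unimodular group $N^*$ (after conjugating by $ma$, which normalizes $N^*$), and let the normalization $a^{\rho_1}$ combine with the measure Jacobian $a^{-2\rho^*}$ to produce the twist $T(D_0)=a^{-\rho}D_0\circ a^{\rho}$. However, there is a genuine gap at exactly the step you flag as the main obstacle: the treatment of the $\fn^{**}=\fn_-$ contribution. Your proposed mechanism --- transferring $X^{**}$-derivatives into $\fn^*$-derivatives modulo $\fh$ via $\sigma\theta(\fn_+)=\fn_-$ --- does not work. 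For $Z\in\fn_-$ one has $Z+\sigma(Z)\in\fh$ with $\sigma(Z)\in\sigma(\fn_-)=\theta(\fn_+)$, which lies in the opposite nilradical, not in $\fn^*$; and in any case the reduction modulo $\fh$ is only available for factors standing on the far right of a monomial (where right $H$-invariance applies), whereas the $\fn$-factors produced by (\ref{u-u0}) stand on the far left. There are also no ``correction terms'' from $\fn^{**}$ to be absorbed by the normalization: $\rho_1=\rho^*-\rho^{**}$ enters only through $a^{\rho_1}a^{-2\rho^*}=a^{-\rho}$.

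The paper closes this gap by an algebraic step your proposal lacks: it upgrades (\ref{u-u0}) to $u-u_0\in\fn^*\cU(\fg)+\cU(\fg)\fh$. Writing $u-u_0$ modulo $\cU(\fg)\fh$ as a sum of Poincar\'e--Birkhoff--Witt monomials $X_1\cdots X_kY_1\cdots Y_l$, with the $X_i$ root vectors in $\fn$ ordered so that $\fn^*$-roots come first and $Y_j\in(\fm\cap\fq)+\fa_q$, one uses that $u-u_0$ commutes with $\fa_h$ to conclude that $\alpha_1+\cdots+\alpha_k$ vanishes on $\fa_h$ in every non-zero term; since the roots of $\fn^{**}$ are strictly negative on $X_h$ while those of $\fn^*$ are $\ge0$ there, at least one $X_i$ --- hence, by the chosen ordering, $X_1$ --- must lie in $\fn^*$. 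With this in hand the $\fn^{**}$-part never occurs as a leading factor, and the argument concludes exactly as in your $\fn^*$ case; without it, your proof does not go through.
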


\begin{proof}
Notice first that since $M_cA_q$ centralizes $\fa$,
it preserves $N^*$ in the adjoint action. Moreover,
the pull-back of the invariant measure
$dn^*$ by the action of $ma\in M_cA_q$ is
$a^{-2\rho^*}\,dn^*$. It follows that
\begin{equation}\label{Af}
\cA f(ma)=a^{\rho_1}\int_{N^*} f(man^*H)\,dn^*
=a^{-\rho}\int_{N^*} f(n^*maH)\,dn^*.
\end{equation}

Let $u$ and $u_0$ be as above, and note that
as remarked above we may
assume $u_0\in  \cU(\fm_c+\fa_q)$.
We shall prove that (\ref{u-u0}) implies
\begin{equation}
u-u_0\in \fn^*\cU(\fg)+\cU(\fg)\fh,
\end{equation}
from which the desired property of $A(Df)(ma)$ then follows
by application of $u$ from the right in
the last expression in (\ref{Af}).

By Poincar\'e-Birkhoff-Witt $u-u_0$, modulo $\cU(\fg)\fh$,
is a sum of terms of the form $X_1\cdots X_k Y_1\cdots Y_l$
where $X_1,\dots,X_k$ are root vectors in $\fn$, say for roots
$\alpha_1,\dots,\alpha_k$, and $Y_1,\dots,Y_l$ belong to
$(\fm\cap\fq)+\fa_q$.
We arrange that the basis elements $X_i$ for $\fn$
are ordered such that roots of $\fn^{*}$ come first.
Since $u-u_0$ commutes with $\fa_h$, it follows from the uniqueness
of the expression that $\alpha_1+\dots+\alpha_k$
vanishes on $\fa_h$ for all non-zero terms. As the roots of $\fn^{**}$
are strictly negative on some element in $\fa_h$, it follows that
in each non-zero contribution at least one root vector $X_i$ must
belong to $\fn^*$.\qed
\end{proof}

Notice that if $f$ is an eigenfunction of
the Laplace operator $L$ on $G/H $, then it follows from from
Lemma \ref{Af eigenfunction},
that $\cA f$ is an eigenfunction for $\mu(L)$ on $M_cA$
with the same eigenvalue.
The operator $\mu(L)$ is explicitly determined in
\cite{Arkiv paper}, Lemma 5.3. In particular,
if $M_c\subset H$,
it follows that if $Lf=cf$, then
\begin{equation}\label{eigenvalue}
(L_A-\rho^2)(\cA f)=c\cA f,
\end{equation}
on A. Here $L_A$ is the
(Euclidean) Laplace operator on $A$, normalized
compatibly with the normalization of $L$.
When we define $L$ to be the image of the Casimir element
in $\cU(\fg)$,
this means that correspondingly $L_A$ is the image of
the Casimir element in $\cU(\fa)$.

\section{Notation and definitions for real hyperbolic spaces}

In this and the following sections $G=\SO(p,q+1)_e$ denotes the
identity component of $\SO(p,q+1)$ and $H=\SO(p,q)_e$ the identity component
of $\SO(p,q)$, embedded in the upper left corner of $G$
as the stabilizer of
$x_0=e_{p+q+1}\in\R^{p+q+1}$.
Throughout we assume $p,q \ge 1$. Then $G/H$ is a
non-Riemannian symmetric space.
The corresponding involution $\sigma$ of $G$ is
obtained from conjugation by the diagonal matrix
$(1,\dots,1,-1)$. The fixed point group $G^\sigma$ has two components,
$H$ and $Hc$, where $c\in G$ is the diagonal matrix
$(1,\dots,1,-1,-1)$.

It is known that $G/H$ is simply connected except for $q=1$, where $G/H$ has
an infinite-folded covering. This means that for $q=1$ we can get a somewhat
more general result by going to coverings.

The map $G\ni g\mapsto gx_0$
induces an identification of $G/H$
with the real hyperbolic space $X=X_{p,q}$, defined by the equation
$$ x_1^2 + x_2^2 + ... + x_p^2 - x_{p+1}^2 - ... - x_{p+q+1}^2 = -1$$
in $\R^{p+q+1}$. Likewise $G/G^\sigma$ is identified
with the projective real hyperbolic space $\P X$,
in which antipodal points
$x$ and $-x$ are identified.

The group $K = K_1\times K_2= \SO(p) \times \SO(q+1)\subset G$ is a
maximal compact subgroup, of which the corresponding Cartan involution
will be denoted $\theta$.
We define one-parameter abelian subgroups $A=\{a_t\}\subset G$ and
$T=\{k_\theta\}\subset K_2$
by
\[a_{t} =
\left(
\begin{array}
[c]{ccc}%
\cosh t & 0 & \sinh t\\
0 & I_{p+q-1} & 0 \\
\sinh t & 0 & \cosh t
\end{array}
\right) ,
\]
and
\[k_{\theta} =
\left(
\begin{array}
[c]{cccc}%
I_p & 0 & 0& 0 \\
0& \cos\theta & 0 & \sin\theta\\
0& 0 & I_{q-1} & 0 \\
0& -\sin\theta & 0 & \cos\theta
\end{array}
\right) ,
\]
where $I_j$ denotes the identity matrix of size $j$.
Then
\begin{equation}\label{kax0}
k_\theta a_t x_0 =
(\sinh t, 0,\dots,0;\sin\theta\cosh t,0,\dots,0,\cos\theta\cosh t).
\end{equation}
The semicolon, which will be used again later,
indicates the separation of the first
$p$ from the last $q+1$ coordinates. The generalized Cartan
decomposition $G=KAH$ holds and gives rise to the use of
{\it polar coordinates} on $X$:
$$K\times \R\ni (k,t)\mapsto ka_tH\in X.$$
In this case we have in addition that $K_1\subset H$ and
$K_2=(K_2\cap H)T(K_2 \cap H)$,
where $K_2\cap H=\SO(q)$ centralizes $A$. Hence
\begin{equation}\label{G-decomposition}
K=(K\cap H)T(K_2\cap H)\quad\text{and}\quad G=(K\cap H)TAH.
\end{equation}

In particular, we shall deal with functions $f$ on $G/H$ which are
$K\cap H$-invariant from the left. It follows that such a function
is uniquely determined by the values
$
f(k_\theta a_tH)
$
for $(\theta,t)\in [0,2\pi]\times\R$. Notice, that the antipodal point
corresponding to $k_\theta a_tH$ is $k_{\theta + \pi} a_{-t}H$.

The $K$-types with a $K\cap H$-fixed vector
are generated by the $K\cap H$-bi-invariant
zonal spherical functions on $K$.
In the present case the zonal spherical
functions on $\SO(q+1)/\SO(q)$ are given by
$$k_\theta\mapsto R_\mu (\cos\theta),$$ when $q>1$,
where $ R_\mu$ is a particular Gegenbauer polynomial
of degree $\mu\in \Z^+$ (i.e. $\mu\in \Z$ and $\mu \ge 0$).
We normalize these by $ R_\mu(1)= 1$, and
note that in particular $ R_1(\cos\theta)= \cos\theta.$
When $q=1$, we also allow the integer
$\mu$ to be negative, and replace $R_\mu (\cos\theta)$
by
$$k_\theta\mapsto e^{i\mu\theta}.$$
It follows that a function $f$ on $G/H$
which is $K$-finite of irreducible type $\mu$
and $K\cap H$-invariant, must be of the form
\begin{equation}\label{expression for f}
f(k_\theta a_t) = R_\mu(\cos \theta) f(a_t), \quad\text{respectively}\quad f(k_\theta a_t)=  e^{i\mu\theta}f(a_t).
\end{equation}

In the study of discrete series on semisimple symmetric spaces
$G/H$, one often needs the following general fact:

\begin{Lemma}\label{generating function}
Let $\cV$ be an irreducible, non-zero closed invariant subspace of
$L^2(G/H)$ or $C^\infty(G/H)$.
Then $\cV$ contains a function $f$ with the properties:
\begin{enumerate}
\item[(a)] $f(e)=1$,
\item[(b)] $f$ is an eigenfunction for the Casimir operator on $G/H$,
\item[(c)] $f$ is $K\cap H$-invariant and of some irreducible $K$-type $\mu$.
\end{enumerate}
\end{Lemma}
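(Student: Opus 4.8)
The plan is to extract the desired function $f$ from $\cV$ by successively imposing the three conditions, exploiting that each is a closed condition compatible with the relevant group action. Since $\cV$ is an irreducible admissible representation, the Casimir operator $L$ acts on $\cV$ as a scalar $c$ (by Schur's lemma applied to the algebra $\D(G/H)$, or because $\cV$ is irreducible and $L$ is central, every vector in $\cV$ is automatically a $c$-eigenfunction). This immediately gives condition (b) for free on all of $\cV$, so the real work is to produce a vector satisfying (a) and (c) simultaneously.

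First I would address the $K$-type condition (c). The restriction of $\cV$ to $K$ decomposes discretely into irreducible $K$-types, and since $\cV\neq 0$ some type $\mu$ occurs; moreover, because $\cV\subset L^2(G/H)$ consists of functions on $G/H$, which carries the right $H$-action trivially and the left $K$-action, the relevant vectors are those invariant under $K\cap H$ on the appropriate side. The key point is that a nonzero $\cV$ must contain a nonzero $K\cap H$-invariant vector of \emph{some} $K$-type $\mu$: one projects a nonzero vector onto a single $K$-isotypic component and then onto its $K\cap H$-fixed subspace. This last projection is nonzero for at least one choice of $K$-type, because the evaluation functional (see below) cannot annihilate every $K\cap H$-fixed vector if $\cV$ separates points. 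I would make this precise by considering the subspace $\cV^{K\cap H}$ of left-$K\cap H$-invariant functions in $\cV$ and showing it is nonzero and decomposes into the types described in (\ref{expression for f}).

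Next I would arrange the normalization (a). Having fixed a nonzero $K\cap H$-invariant vector $g$ of pure type $\mu$, I consider the evaluation $g\mapsto g(e)$. If $g(e)\neq 0$ for some such $g$, rescaling gives $f(e)=1$ and we are done. The essential subtlety is that it is \emph{not} automatic that some $K\cap H$-invariant vector of a given type is nonzero at the origin; this is where the main obstacle lies. The resolution is to use left translates: the irreducibility of $\cV$ (equivalently, the fact that matrix coefficients separate points, or that point evaluation at $eH$ defines a nonzero functional on the smooth vectors) guarantees that there exists \emph{some} vector $v\in\cV$ with $v(eH)\neq 0$. Averaging $v$ over $K\cap H$ on the left, using that $eH$ is fixed by $K\cap H$, preserves the value at the origin while producing a $K\cap H$-invariant function; decomposing the result into $K$-types and noting that by (\ref{expression for f}) each type contributes $R_\mu(1)=1$ (respectively the value $1$ of $e^{i\mu\theta}$ at $\theta=0$) at $eH$, at least one isotypic component must be nonzero at the origin. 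Selecting that component and normalizing yields $f$ with all three properties.

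The step I expect to be the main obstacle is precisely ensuring that the averaging and $K$-type projection do not kill the value at the origin. The danger is that the $K\cap H$-average of $v$ could vanish at $eH$, or that the projection onto a single type could. The first cannot happen because $eH$ is a $K\cap H$-fixed point, so $(P_{K\cap H}v)(eH)=v(eH)\neq 0$. The second is controlled by the normalization $R_\mu(1)=1$ in (\ref{expression for f}): the sum over types of the projections, evaluated at the origin, reconstructs the nonzero number $v(eH)$, so not every summand can vanish there. Once a surviving type $\mu$ is identified, the chosen component automatically satisfies (b) because Casimir acts as the scalar $c$ on all of $\cV$, satisfies (c) by construction, and satisfies (a) after rescaling. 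I would note finally that the same argument applies verbatim in the $C^\infty(G/H)$ case, using smooth vectors and the fact that point evaluation is a continuous functional there.
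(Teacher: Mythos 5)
The paper does not actually prove this lemma: it is stated as ``the following general fact'' that ``one often needs'' and no argument is supplied, so there is nothing to compare your proof against line by line. On its own terms, your argument is the standard one and is essentially correct: Casimir acts by a scalar on an irreducible invariant subspace, so (b) is automatic; averaging over $K\cap H$ fixes the value at the base point $eH$ because $K\cap H\subset H$; and the $K$-isotypic components of the averaged vector have values at $eH$ summing to $v(eH)\neq 0$, so one of them survives. Two points deserve to be made more carefully than you do. First, for $\cV\subset L^2(G/H)$ point evaluation is meaningless until you pass to $K$-finite (hence smooth, indeed analytic) vectors, and the pointwise convergence at $eH$ of the $K$-type expansion requires convergence in the $C^\infty$ topology, which holds for smooth vectors of an admissible representation; you allude to smooth vectors but should make this the backbone of the argument. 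Second, the existence of a smooth vector with $v(eH)\neq 0$ does not follow from irreducibility or from ``separating points'' as you phrase it, but from $G$-invariance: if every smooth vector vanished at $eH$, then applying $g\in G$ shows every smooth vector vanishes at every point $g^{-1}H$, hence $\cV=0$ by density of smooth vectors. With these repairs your outline is a complete proof of a statement the authors chose to leave as a known fact.
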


Since $\cV$ is irreducible, it is generated by this element $f$.

The number $\mu$ is related to the highest weight of
the $K$-type as follows. Let $T\in\fk$
be the infinitesimal generator of $k_\theta$,
then $i\mu$ is the value of the highest weight on $T$
(with a suitable choice of positive restricted roots for $\fk$).

\subsection{Parametrization of discrete series}\label{subs ds}

We define
\begin{equation}\label{defi rho}
\rho = \half (p+q-1) \quad\text{and}\quad \rho_c = \half (q-1),
\end{equation}
and for $\lambda>0$
\begin{equation*}
\mu_{\lambda} = \lambda +\rho - 2\rho_c.
\end{equation*}

We first consider the case $q>1$.
It is known (see for example \cite{FJ}, Section 8)
that the discrete series for the hyperbolic space $G/H$ is
parametrized by the set of positive numbers $\lambda$
such that $\mu_{\lambda} \in \Z$.
The representations that arise from the construction
for general semisimple symmetric spaces in \cite{FJ} are
exactly those for which $\mu_\lambda\geq 0$. The remaining
discrete series (called `exceptional' in \cite{FJ}),
correspond to those (finitely many) parameters $\lambda>0$ for which
$\mu_\lambda<0$. The exceptional
parameters exist if and only if $q>p+3$. The discrete
series representation
with parameter $\lambda$ descends to the projective space
$\P X$ if and only if $\mu_\lambda$ is even.
For general semisimple symmetric spaces, the full
discrete series (including the exceptional ones)
is described in \cite{O-M}.

For $q=1$, where $\rho_c=0$,
we parametrize the discrete series by $\lambda \in \R\setminus\{0\}$
such that $|\lambda| + \rho \in \Z$.
In this case we have $\mu_\lambda=\lambda+\rho\geq1$ for $\lambda>0$,
whereas for $\lambda<0$ we define
$$\mu_\lambda=\lambda-\rho\leq-1.$$
There are no exceptional discrete series.
We note that for $q=1$ every $\lambda \ne 0$ defines a relative
discrete series for the infinite covering space of $G/H$.

We return to the general situation $q\geq1$, and describe the
discrete series which are spherical, according to \cite{FJ}, Theorem 8.2.
Spherical discrete series exist if and only if $q>p+1$, and in this case,
the representation with parameter $\lambda$ is spherical
if and only if $\mu_\lambda\le 0$ and even.

The discrete series parameter $\lambda$ is related to the eigenvalue of the
Laplace-Beltrami operator $\Delta$ of $G/H$ on the
corresponding representation
space in $L^2(G/H)$. More precisely, we have
$\Delta f=(\lambda^2-\rho^2)f$,
for functions $f$ in this space
(with suitable normalization of~$\Delta$), and we can explicitly describe the
discrete series
by a generating function of the form
(\ref{expression for f}) as follows.

\begin{Prop}\label{generating function1}
Let $\lambda\in\R\setminus\{0\}$ be a discrete series parameter
(if $q>1$, this implies in particular that $\lambda >0$).
The corresponding discrete series representation $T_\lambda$ has
a $K\cap H$-invariant generating function
of the following form
\begin{enumerate}
\item[(i)]
For $q>1$ and $\mu_\lambda\ge 0$,
$$ \psi_\lambda (k_\theta a_t) =
R_{\mu_\lambda}(\cos\theta)\,(\ch t)^{-\lambda - \rho}.$$
For $q=1$ and $\mu_\lambda \in\Z$, or for all $\lambda$ for the relative discrete series for the universal covering,
$$ \psi_\lambda (k_\theta a_t) =
e^{i\mu_\lambda\theta}\,(\ch t)^{-|\lambda| - \rho}.$$
\item[(ii)]
For $q>p+3$, $\mu_\lambda=-n < 0$
and
\begin{enumerate}
\item[(I)]
$n= 2m$ even,
$$ \xi_\lambda (k_\theta a_t) =
P_\lambda(\ch^2t) (\ch t)^{-\lambda-\rho-2m}.
$$
\item[(II)]
$n= 2m - 1$ odd,
$$ \xi_\lambda (k_\theta a_t) =\cos \theta\,\,
P_\lambda(\ch^2t) (\ch t)^{-\lambda-\rho-2m},
$$
\end{enumerate}
where in each case $P_\lambda$ is a polynomial of degree $m$.
\end{enumerate}
\end{Prop}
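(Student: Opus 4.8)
The plan is to separate variables in the polar coordinates $k_\theta a_tH$, reduce the Casimir eigenvalue equation to a second order radial ODE, and then read off both families from a single condition on when the natural power series solution terminates. By Lemma \ref{generating function} and \eqref{expression for f} the generating function is $K\cap H$-invariant of some $K$-type $\mu$, hence $f(k_\theta a_t)=R_\mu(\cos\theta)\,g(t)$ (or $e^{i\mu\theta}g(t)$ if $q=1$), and $\Delta f=(\lambda^2-\rho^2)f$. Computing the radial part of $\Delta$ in these coordinates—where $R_\mu$ is the zonal harmonic on the $q$-sphere formed by the last $q+1$ (timelike) coordinates, with eigenvalue $\mu(\mu+q-1)$, and the invariant density is $(\sh t)^{p-1}(\ch t)^q$—turns the eigenvalue equation into
\begin{equation}\label{radialODE}
g''+\bigl[(p-1)\coth t+q\tanh t\bigr]g'+\frac{\mu(\mu+q-1)}{\ch^2 t}\,g=(\lambda^2-\rho^2)g,
\end{equation}
with $\mu(\mu+q-1)=\mu^2$ when $q=1$. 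The positive sign of the angular term reflects the indefinite signature of $G/H$.

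Next I would seek the solution regular at $t=0$ and decaying at infinity in the form $g=\sum_{j\ge0}c_j(\ch t)^{-\rho-\lambda-2j}$. Applying the operator on the left of \eqref{radialODE} minus $(\lambda^2-\rho^2)$ to a single power gives, using $\tanh^2t=1-\ch^{-2}t$ and $\ch^{-2}t\,(\ch t)^{-s}=(\ch t)^{-s-2}$,
\begin{equation}\label{AB}
A(s)(\ch t)^{-s}+B_\mu(s)(\ch t)^{-s-2},\quad A(s)=(s-\rho-\lambda)(s-\rho+\lambda),\quad B_\mu(s)=\mu(\mu+q-1)-s(s-2\rho_c).
\end{equation}
Writing $s_j=\rho+\lambda+2j$, the eigenvalue equation becomes the two-term recursion $c_jA(s_j)+c_{j-1}B_\mu(s_{j-1})=0$. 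The leading indicial relation $A(s_0)=A(\rho+\lambda)=0$ leaves $c_0$ free, $A(s_j)=4j(j+\lambda)\neq0$ for $j\ge1$ makes the recursion well defined, and square-integrability against $(\ch t)^{q}(\sh t)^{p-1}$ (the integrand behaves like $e^{-2\lambda t}$) forces us to discard the other exponent $\rho-\lambda$. Since $B_\mu(s)=-(s-\mu-2\rho_c)(s+\mu)$, the series terminates, say at $j=m$, exactly when $s_m=\mu+2\rho_c$, i.e.
$$\mu=\mu_\lambda+2m,\qquad m\in\Z^+ .$$

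This single relation yields both parts. If $\mu_\lambda\ge0$ one takes $m=0$ and $\mu=\mu_\lambda$, and the series reduces to the pure power $(\ch t)^{-\lambda-\rho}$, giving $\psi_\lambda$ (and, for $q=1$, $(\ch t)^{-|\lambda|-\rho}$ with character $e^{i\mu_\lambda\theta}$). If $\mu_\lambda=-n<0$ the minimal admissible $K$-type is the smallest $\mu=\mu_\lambda+2m\ge0$, namely $\mu=0$ for $n=2m$ even and $\mu=1$ for $n=2m-1$ odd (consistent with the fact that the representation descends to $\P X$ precisely when $\mu_\lambda$ is even); in either case the terminating solution is $P_\lambda(\ch^2t)(\ch t)^{-\lambda-\rho-2m}$ with $\deg P_\lambda=m$, which is the claimed $\xi_\lambda$. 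Each constructed function is smooth and even at $t=0$, lies in $L^2(G/H)$, and normalizes to $f(e)=1$ via $R_\mu(1)=1$; by Lemma \ref{generating function} and the parametrization of Subsection \ref{subs ds} it generates $T_\lambda$.

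The step I expect to be the main obstacle is the exceptional case: one must justify that the minimal $K$-type is $0$ or $1$ and that the terminating solutions exist exactly for the classified exceptional parameters (so that their range is governed by $q>p+3$), and, more delicately, confirm that no slowly decaying component of the second solution of \eqref{radialODE} survives, so that the formal terminating series is genuinely the unique $L^2$ eigenfunction rather than merely a solution regular at the origin; identifying it with the representation $T_\lambda$ of Subsection \ref{subs ds} also draws on the known classification. Getting the correct sign and coefficients in \eqref{radialODE}—a consequence of the pseudo-Riemannian structure—is a secondary but essential technical point; once \eqref{radialODE} is in hand, everything reduces to the elementary recursion coming from \eqref{AB}.
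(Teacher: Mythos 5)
Your derivation is essentially correct, but it takes a genuinely different route from the paper. The paper does not set up the radial ODE at all: for part (i) it simply cites the explicit generating function from Flensted--Jensen's construction (\cite{FJ}, formula (8.11)), and for part (ii) it quotes from \cite{FJO}, Theorem 5.1, the closed forms $\xi_\lambda(k_\theta a_t)=\phi_{n,m}(\sh^2 t)$ (resp.\ $\cos\theta\,\ch t\,\phi_{n,m}(\sh^2 t)$), where $\phi_{n,m}(s^2)=[\omega^m(1+x^2)^{n-\rho_c}]_{x=(s,0,\dots,0)}$; the only thing actually proved in the paper is the elementary lemma that $\omega\bigl(Q(x^2)(1+x^2)^\nu\bigr)=P(x^2)(1+x^2)^{\nu-2}$ with control of $\deg P$, which after $m$ iterations converts $\phi_{n,m}$ into the stated form $P_\lambda(\ch^2t)(\ch t)^{-\lambda-\rho-2m}$. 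Your approach instead rederives the candidate functions from scratch via the indicial/recursion analysis of the hypergeometric-type radial equation (your $A$, $B_\mu$ and the termination condition $\mu=\mu_\lambda+2m$ are correct, and they reproduce the constraint $q>p+3$ for the exceptional range), which is exactly the ``known explicit formulas with hypergeometric functions'' route that the authors mention and deliberately avoid in favour of one ``relating more directly to general theory.'' What your route buys is self-containedness of the formulas and a transparent explanation of where the exponent shift $-2m$ and the factor $\cos\theta$ come from; what it costs is that the identification of your terminating solutions with $T_\lambda$ still rests on the classification of Subsection 3.1 (for these rank-one spaces $\D(G/H)$ is generated by the Casimir, so a nonzero $K$-finite $L^2$ eigenfunction with eigenvalue $\lambda^2-\rho^2$ necessarily lies in the unique discrete series with that parameter), exactly the external input the paper imports from \cite{FJ} and \cite{FJO}. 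One of the two obstacles you flag is actually immaterial: since any nonzero $K$-finite vector of the irreducible subspace generates it, you need not prove that $\mu=0$ or $1$ is the \emph{minimal} $K$-type in the exceptional case, only that your terminating solution is nonzero, smooth at $t=0$ (clear, since it is a polynomial in the ambient coordinates divided by a power of $\ch t\ge 1$) and square-integrable; likewise the worry about a surviving slowly decaying component disappears because you construct the solution directly as a finite sum of decaying powers rather than by connection-coefficient analysis.
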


\begin{proof} The expressions for the generating
functions can be derived from
known explicit formulas with hypergeometric functions
for the $K$-finite functions
on $G/H$, see \cite{RLN}, p.~1864, or
\cite{Faraut}, p.~403.
However, we prefer to give an alternative proof which
relates more directly to general theory.

For (i) we refer to \cite{FJ}, formula (8.11), and for
(ii) we refer to \cite{FJO}, where explicit expressions are
determined for the generating functions of the
exceptional discrete series. It follows from \cite{FJO}, Theorem 5.1,
that the following function generates the discrete series
with parameter $\lambda$:
\begin{enumerate}
\item[(I)]
$n= 2m$ even,
$$ \xi_\lambda (k_\theta a_t) =
\phi_{n,m}(\sh^2 t),
$$
\item[(II)]
$n= 2m - 1$ odd,
$$ \xi_\lambda (k_\theta a_t) =
\cos \theta\ch t \,\,\phi_{n, m}(\sh^2 t).
$$
\end{enumerate}
Here $\phi_{n,m}$ is the function on $\R^+$ defined by
\begin{equation}\label{defi phi}
\phi_{n, m}(s^2)= [\omega^m(1+x^2)^{n-\rho_c}]_{|x=(s, 0, ... ,0)},
\end{equation}
where $x\in\R^p$, and where
$\omega$ denotes the Laplace operator on $\R^p$.

Note that $\xi_\lambda$ differs from the function constructed
in \cite{FJO} by being $K\cap H$-invariant.
In the notation of \cite{FJO}, the $K\cap H$-invariant
generating function is $\int_{K\cap H}\xi^o_{\lambda,\omega^m} (kg)\, dk $.

In order to prove the proposition it now suffices to show
for each relevant pair $(n,m)$ that there exists a polynomial
$P$ of degree $m$ such that
\begin{equation}\label{expression phi}
\phi_{n,m}(s^2)=P(s^2)(1+s^2)^{n-2m-\rho_c},\quad(s\in\R)
\end{equation}
for all $s\in\R$.
The expression (\ref{expression phi}) is derived from (\ref{defi phi})
by successive use of the following lemma. Note that
$n-\rho_c=-\lambda-\rho+\rho_c<-\frac p2.$
\qed\end{proof}

\begin{Lemma}
Let $Q$ be a polynomial of degree $d$ and let $\nu\in\R$. Then
there exists a polynomial $P$ of degree $\le d+1$ such that
$$
\omega(Q(x^2)(1+x^2)^\nu)=P(x^2)(1+x^2)^{\nu-2},\quad(x\in\R^p).
$$
If $\nu+d\neq 0$ and $\nu+d\neq -\frac{p-2}2$ then $\deg P=d+1$
\end{Lemma}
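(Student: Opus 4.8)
The plan is to compute the Laplacian $\omega$ on $\R^p$ directly in terms of the radial variable $r=|x|$ and verify the claimed form by a straightforward differentiation. Writing $u=x^2=r^2$, I would use the fact that for a radial function $g(r^2)$ on $\R^p$ the Euclidean Laplacian acts as
\begin{equation*}
\omega\bigl(g(r^2)\bigr)=4r^2 g''(r^2)+2p\,g'(r^2),
\end{equation*}
so that in the variable $u=r^2$ the operator becomes $\omega=4u\,\partial_u^2+2p\,\partial_u$. Applying this to $g(u)=Q(u)(1+u)^\nu$ is then a purely algebraic matter of differentiating a product.

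First I would carry out the two derivatives. We have
\begin{equation*}
\partial_u\bigl(Q(u)(1+u)^\nu\bigr)=\bigl(Q'(u)(1+u)+\nu Q(u)\bigr)(1+u)^{\nu-1},
\end{equation*}
and differentiating once more produces a factor $(1+u)^{\nu-2}$ times a polynomial. Substituting into $\omega=4u\,\partial_u^2+2p\,\partial_u$ and collecting the common factor $(1+u)^{\nu-2}$, I obtain $\omega\bigl(Q(u)(1+u)^\nu\bigr)=P(u)(1+u)^{\nu-2}$, where $P$ is an explicit polynomial built from $Q$, $Q'$, $Q''$ and the constants $\nu,p$. A degree count on each term shows that $P$ has degree at most $d+1$: the top-degree contributions come from the terms $4u(1+u)^2 Q''$, $8\nu u(1+u)Q'$, $4\nu(\nu-1)uQ$ (after expansion) together with the first-order terms, and each of these has degree at most $d+1$.

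The only real point requiring care is the sharp statement about the degree of $P$, namely that $\deg P=d+1$ precisely when $\nu+d\neq 0$ and $\nu+d\neq -\tfrac{p-2}{2}$. For this I would isolate the coefficient of $u^{d+1}$ in $P$. Writing $Q(u)=c\,u^d+\cdots$ with $c\neq 0$, the leading coefficient of $P$ is a quadratic expression in $\nu$ with coefficients depending on $d$ and $p$; I expect it to factor as a nonzero constant times $(\nu+d)\bigl(\nu+d+\tfrac{p-2}{2}\bigr)$, up to the overall factor $c$. The two exceptional values of $\nu+d$ in the statement are exactly the roots of this quadratic, so $\deg P<d+1$ occurs exactly when one of these vanishes. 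Thus the main obstacle is the bookkeeping of the leading coefficient: I must verify that the coefficient of $u^{d+1}$ in $4u\,\partial_u^2+2p\,\partial_u$ applied to $Q(u)(1+u)^\nu$ indeed factors this way, which pins down the two excluded values and confirms the sharp degree claim. Everything else is routine differentiation and degree counting.
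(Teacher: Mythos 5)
Your proposal is correct, and the computation you outline does work: writing $u=x^2$, the radial form $\omega=4u\,\partial_u^2+2p\,\partial_u$ gives
$P(u)=4uQ''(u)(1+u)^2+8\nu uQ'(u)(1+u)+4\nu(\nu-1)uQ(u)+2pQ'(u)(1+u)^2+2p\nu Q(u)(1+u)$,
whose coefficient of $u^{d+1}$ is $c\bigl[4d(d-1)+8\nu d+4\nu(\nu-1)+2p(d+\nu)\bigr]=4c(\nu+d)\bigl(\nu+d+\tfrac{p-2}{2}\bigr)$ when $Q(u)=cu^d+\cdots$, so the factorization you anticipated is exactly right and the two excluded values are its roots. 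Where you differ from the paper is in how the sharp degree claim is obtained: the paper never computes $P$ explicitly, but instead observes that $Q(x^2)(1+x^2)^\nu=c(x^2)^{\nu+d}+{}$lower-order terms in the expansion at infinity, and invokes the closed formula $\omega(x^\mu)=\mu(\mu+p-2)x^{\mu-2}$ for $\mu=2(\nu+d)$; this yields the leading coefficient $2c(\nu+d)\bigl(2(\nu+d)+p-2\bigr)$ immediately, with the two exceptional values appearing as the zeros of $\mu(\mu+p-2)$ rather than as roots of a quadratic you must expand and factor by hand. Your route is more elementary and self-contained (everything is explicit product-rule bookkeeping), while the paper's shortcut buys the factorization for free and avoids the risk of arithmetic slips in collecting the five top-degree contributions; the only caveat on your side is that you should actually carry out the leading-coefficient computation rather than merely expecting the factorization, but as verified above it does come out as claimed.
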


\begin{proof} The existence of the polynomial $P$ is an easy
computation. The statement about its degree follows from
the fact that
$$
Q(x^2)(1+x^2)^\nu=(x^2)^{\nu+d}+\text{ lower order terms,}
$$
since $\omega(x^\mu)=\mu(\mu+p-2)x^{\mu-2}$ for all $\mu\in\R$.\qed
\end{proof}

\section{A unipotent subgroup}

Define
$$
n_{u,v}=\exp(Z_{u,v})\in G, \quad\text{with}\quad
Z_{u,v}=\left(
\begin{array}
[c]{cccc}%
0 & u & v & 0\\
-u^t & 0  & 0 & u^t\\
v^t & 0 & 0 & -v^t \\
0 & u & v & 0
\end{array}
\right)\in \fg,
$$
where $u\in\R^{p-1}$ and $v\in\R^q$ are considered as rows,
and $u^t$, $v^t$ are the corresponding columns.
If $Y$ denotes the infinitesimal generator of $a_t$, then
$[Y,Z_{u,v}]=Z_{u,v}$ for all $u,v$.
The matrices $Z_{u,v}$ span a commutative subalgebra $\fn$ of $\fg$,
and the subgroup $N=\exp(\fn)$
is the unipotent radical of a minimal
$\sigma\theta$-stable parabolic subgroup $P$ in $G$. In particular,
we note that $N\cap H$ is trivial.

Easy calculations show
\begin{equation}\label{nx0}
n_{u,v} x_0 = (\half (u^2 - v^2),u;{-}v,1 + \half (u^2 - v^2)),
\end{equation}
and
\begin{equation}\label{anx0}
a_s n_{u,v} x_0 = (\sh s + \half \es (u^2 - v^2),u;{-}v
,\ch s + \half \es (u^2 - v^2)),
\end{equation}
for all $s\in\R$, where $u^2=u\cdot u$ and $v^2=v\cdot v$ as usual.

\begin{Lemma}\label{diverging integrals}
Assume $p>1$ and
$p+q>3$. Then there exists a non-negative $K$-invariant
Schwartz function $f\in\cC(G/H)$ for which the integral
$\int_N f(n)\,dn$ diverges.

Assume in addition
$q>p+1$. Then the integral diverges for the
$K$-invariant discrete series function $f=\psi_\lambda$
where $\lambda=\frac12(q-p-1)$ (see Proposition
\ref{generating function1}(i)).
\end{Lemma}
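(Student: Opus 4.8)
The plan is to reduce both statements to one explicit Euclidean integral and then to isolate the region responsible for the divergence. The map $(u,v)\mapsto n_{u,v}$ identifies $N$ with $\R^{p-1}\times\R^q$, and since $\fn$ is commutative this is a group isomorphism carrying Haar measure to Lebesgue measure $du\,dv$. Both functions $f$ below are $K$-invariant, so I only need $f(n_{u,v}x_0)$ as a radial function: writing $f(ka_tH)=F(t)$, the $K=\SO(p)\times\SO(q+1)$-orbit of a point of $X$ is determined by the squared norm of its last $q+1$ coordinates, which in polar coordinates equals $\cosh^2t$; by (\ref{nx0}) this yields $\cosh^2t(u,v)=v^2+(1+\tfrac12(u^2-v^2))^2$. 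Hence the problem becomes the analysis of
$$\int_N f(n)\,dn=\int_{\R^{p-1}\times\R^q}F\bigl(t(u,v)\bigr)\,du\,dv,$$
and since $f\ge0$ it suffices to exhibit a subregion on which this is already infinite.

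For the second assertion I would first note that $\lambda=\tfrac12(q-p-1)$ gives $\mu_\lambda=\lambda+\rho-2\rho_c=0$, so $\psi_\lambda$ is $K$-invariant and, by Proposition \ref{generating function1}(i), equals $(\cosh t)^{-(\lambda+\rho)}=(\cosh t)^{-(q-1)}\ge0$; here $q>p+1$ is exactly the condition $\lambda>0$ making $\psi_\lambda$ a genuine spherical discrete series function. Restricting to $\{|u|\ge1,\ |u^2-v^2|\le1\}$, the term $1+\tfrac12(u^2-v^2)$ is bounded there, so $\cosh^2t\le u^2+C$ and the integrand is $\gtrsim|u|^{-(q-1)}$, while for fixed $u$ the set $\{v:|v^2-u^2|\le1\}$ is a shell of $q$-dimensional volume $\asymp|u|^{q-2}$. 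The region therefore contributes at least $c\int_{|u|\ge1}|u|^{-(q-1)}|u|^{q-2}\,du=c\int_{|u|\ge1}|u|^{-1}\,du$, which diverges precisely because $p>1$.

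For the first assertion the task is to locate the divergent region and calibrate it against the admissible Schwartz decay. Passing to $a=u^2,\ b=v^2$ with radial weights $a^{(p-3)/2}b^{(q-2)/2}$, and then to $\beta=1+\tfrac12(a-b)$ so that $\cosh^2t=b+\beta^2$, I would restrict to $R'=\{b\ge b_0,\ 1\le\beta\le\sqrt b\}$. On $R'$ one has $a\asymp b$ and $\cosh^2t\asymp b$, and a short computation (integrating $\beta$ over an interval of length $\asymp\sqrt b$ and substituting $b=e^{2t}$) rewrites the restricted integral as a constant multiple of $\int F(t)\,e^{(2\rho-1)t}\,dt$, the exponent $2\rho-1=p+q-2$ being the exponential growth rate of the pushforward of $du\,dv$ to the $t$-line. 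It then remains to choose $F$: I would take a smoothed even version of $F(t)=e^{-\rho|t|}e^{-\sqrt{|t|}}$. Since $e^{-\sqrt{|t|}}$ decays faster than every power of $|t|$, this $F$ obeys $|F^{(k)}(t)|\le C_{k,N}\,e^{-\rho|t|}(1+|t|)^{-N}$ for all $k,N$, and because $\Xi(a_t)\gtrsim e^{-\rho|t|}$ it defines a non-negative $K$-invariant $f\in\cC(G/H)$; at the same time $\int F(t)e^{(2\rho-1)t}\,dt=\int e^{-\sqrt{|t|}}e^{(\rho-1)t}\,dt=\infty$ as soon as $\rho>1$, i.e.\ $p+q>3$.

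The main obstacle is the correct identification and bookkeeping of this divergent region. The naive choice $\{|u^2-v^2|\le1\}$ used for $\psi_\lambda$ only produces the weaker rate $e^{(2\rho-2)t}$ and would require $p+q>5$; the true contribution comes from points escaping to Euclidean infinity with $|u|\sim|v|\sim e^{t}$ while the geodesic distance $t$ grows only logarithmically, so that the swept-out Euclidean volume grows like $e^{(2\rho-1)t}$ against the spherical decay $e^{-\rho t}$. The residual rate $\rho-1=\tfrac12(p+q-3)$ is positive exactly when $p+q>3$, which is what makes that the sharp hypothesis; the second delicate point is that $F$ must be taken to decay like $e^{-\rho|t|}$ up to a subexponential factor — fast enough in every polynomial sense to be Schwartz, yet too slow to defeat the factor $e^{(\rho-1)t}$.
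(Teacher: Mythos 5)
Your argument is correct and follows essentially the same route as the paper: both lower-bound the $N$-integral by restricting to the near-null-cone region $0\le \|u\|-\|v\|\lesssim 1$, where $\cosh^2 t\asymp\|v\|^2$ and the pushforward of $du\,dv$ to the $t$-line grows like $e^{(2\rho-1)t}$, so that the critical decay $e^{-\rho t}$ leaves a divergent factor $e^{(\rho-1)t}$ exactly when $p+q>3$. The only difference is cosmetic: the paper takes the simpler test function $f(ka_t)=(\cosh t)^{-\rho-\nu}$ with $0<\nu\le\frac12(p+q-3)$ (which is automatically smooth and Schwartz, and specializes to $\psi_\lambda$ at $\nu=\lambda$, handling both assertions at once), whereas your $e^{-\rho|t|-\sqrt{|t|}}$ and the separate shell argument for $\psi_\lambda$ work but require the extra smoothing remark.
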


\begin{proof} Let $f(ka_tH)=(\cosh t)^{-\rho-\nu}$, with $\nu>0$, then
$f\in \cC(G/H)$ (see the remark after Theorem \ref{first prop}).
Using (\ref{kax0}) and (\ref{nx0}), we find
$$
\begin{aligned}
\int_N f(n)\,dn&=\int_{\R^q} \int_{\R^{p-1}} f(n_{u,v})\, du\, dv\\
&=\int_{\R^q} \int_{\R^{p-1}} (v^2+(1+\frac12(u^2-v^2))^2)^{-\frac12(\rho+\nu)}\, du\, dv\\
&=\int_0^\infty \int_0^\infty (y^2+(1+\frac12(x^2-y^2))^2)^{-\frac12(\rho+\nu)}
x^{p-2} y^{q-1}\, dx\, dy.
\end{aligned}
$$
In particular, for $1\le y\le x\le y+1$, we have
$0\le x^2-y^2=(x-y)(x+y)\le 2y+1$, and hence
$$y^2+(1+\frac12(x^2-y^2))^2\le 10y^2.$$
Hence
$$
\begin{aligned}
\int_N f(n)\,dn
&\geq \int_1^\infty \int_{y}^{y+1}
(10y^2)^{-\frac12(\rho+\nu)} x^{p-2} y^{q-1}\, dx\, dy\\
&\geq C\int_1^\infty
y^{-(\rho+\nu)+p+q-3}\, dy,
\end{aligned}
$$
with $C>0$. This integral
diverges when $\nu\le \frac12(p+q-3)$.

If $q>p+1$ and $\lambda=\frac12(q-p-1)$, then the function $f$ defined above
with $\nu=\lambda$ is exactly $\psi_\lambda$. The integral diverges
since in this case $\lambda=\frac12(q-p-1)\le\frac12(p+q-3)$.
\qed\end{proof}

If $p+q\le 3$, it is likely that the $N$-integral converges for all
Schwartz functions, but we shall not consider this question here.

Motivated by Section \ref{general theory},
we now define the following subgroup of $N$.
Let
$$u=(u_1,\dots,u_{p-1})\in \R^{p-1}\quad\text{and}\quad
v = (v_q, \dots ,v_1)\in\R^q.$$
It is convenient to number the entries of $v$ from right to left
as indicated. It is not difficult to verify that the following agrees with
(\ref{defin*}).

\begin{Def} Let $N^*\subset N$ be the $\max (p-1, q)$-dimensional subgroup
$$
N^*=\{ n_{u,v} \mid u\in\R^{p-1}, v\in\R^q, u_j=v_j \text{ for } j=1,\dots,l \},
$$
where $l = \min (p-1, q)$.
\end{Def}

We want to integrate $K\cap H$-invariant functions on $G/H$
over sets of the form $a_sN^*$,
where $s\in\R$, with respect to Haar measure of $N^*$.
For this purpose we shall need the following,
which is easily deduced from
(\ref{kax0}) and (\ref{anx0}).

The relation
$$(K\cap H)k_\theta a_t H = (K\cap H) a_s n_{u,v}H $$
implies
$$\ch^2 t = v^2 + (\ch s + \half\es (u^2 - v^2))^2$$
and
$$ \cos \theta = (\ch s + \half\es (u^2 - v^2))/ \ch t$$
for all $\theta$, $t$, $s$, $u$ and $v$.

When $p = 1$, the value of $t$, including its sign,
can also be determined by
$$ \sh t = \sh s + \half\es (- v^2), $$
whereas if $p > 1$ the sign is redundant since by (\ref{kax0})
the double cosets $(K\cap H)k_\theta a_t H$ and $(K\cap H)k_\theta a_{-t} H$
are identical. We assume in this case that $t\ge 0$.

With these relations between $(s,u,v)$ and $(\theta,t)$,
we have $f(a_s n_{u,v}H)= f(k_\theta a_t H)$
for $K\cap H$-invariant functions $f$ on $G/H$.

We assume now $n_{u,v} \in N^*$. Then the expression $u^2-v^2$ simplifies.
We distinguish the two cases:

\smallskip
{\bf A.} $p> q$. Then $u=(v_1,\dots,v_q,u')$, where
$u'\in\R^{p-1-q}$. We put $x=\|u'\|$ and  $y=\|v\|$, and obtain
\begin{equation}\label{first cosh}
\ch^2 t= y^2+(\ch s+\half\es x^2)^2.
\end{equation}
and
\begin{equation}\label{first cos}
\cos \theta = (\ch s + \half\es x^2)/ \ch t.
\end{equation}

For the integration over $N^*$ we use polar coordinates
for $u'$ and $v$ with
\begin{equation}\label{first alphabeta}
\alpha=p-2-q, \quad \beta=q-1.
\end{equation}

We conclude that the measure on $N^*$ can be normalized such that
for a $K\cap H$-invariant function,
\begin{equation}\label{N*integral}
\int_{N^*} f(a_s n^*H)\,dn^*
=\int_0^\infty  \int_0^\infty f(k_\theta a_tH)\, x^\alpha y^\beta\, dx\, dy,
\end{equation}
where $t=t(s,x,y)\geq 0$ is determined
by (\ref{first cosh}) and $\theta= \theta(s,x,y)$ by (\ref{first cos}).

Note that in the degenerate case $p-1=q$,
we have $u'=0$. Hence $x=0$ in (\ref{first cosh}),
and the right hand side of (\ref{N*integral})
has to be interpreted without the integration over $x$.

\smallskip
{\bf B.} $q \ge p $. Then $v=(v',u_{p-1},\dots,u_1)$, where
$v'\in\R^{q-p+1}$. We put $x=\|u\|$ and  $y=\|v'\|$, and obtain
\begin{equation}\label{second cosh}
\ch^2 t= x^2+y^2+(\ch s-\half\es y^2)^2.
\end{equation}
and
\begin{equation}\label{second cos}
\cos \theta = (\ch s - \half\es y^2)/ \ch t.
\end{equation}

We use polar coordinates
for $u$ and $v'$ with
\begin{equation}\label{second alphabeta}
\alpha=p-2, \quad \beta=q-p.
\end{equation}
Then (\ref{N*integral}) holds with $t=t(s,x,y)\geq 0$ determined
by (\ref{second cosh}) and $\theta= \theta(s,x,y)$ by (\ref{second cos}).

In the degenerate case $p=1$, the sign of $t$ is determined by
$$ \sh t = \sh s - \half\es (y^2). $$
Note also that in this case $u=0$, so that
$x=0$ in (\ref{second cosh}), and again (\ref{N*integral})
is interpreted without integration over $x$.

\medskip
To summarize, let us define
for $s,x,y\in\R$
\begin{equation}\label{def Theta}
\Theta(s,x,y)=\begin{cases}
y^2+(\ch s+\half\es x^2)^2, & (p> q) \\
x^2+y^2+(\ch s-\half\es y^2)^2, \quad& (q \ge p).
\end{cases}
\end{equation}
Then in particular
for a $K$-invariant function on $G/H$, we can write
\begin{equation}\label{N*integral2}
\int_{N^*} f(a_s n^*H)\,dn^* =
\int_0^\infty  \int_0^\infty F(\Theta(s,x,y)) x^\alpha y^\beta\, dx\, dy,
\end{equation}
where $F$ is the function $F(\ch^2 t)=f(a_tH)$ on $\R_+$,
and $\alpha$ and $\beta$ are given by either (\ref{first alphabeta}) or (\ref{second alphabeta}).

We have normalized the measure on $N^*$ such that
this integral equation is valid without a constant.

\section{Main results for real hyperbolic spaces}

For functions on $G/H$ we define, assuming convergence,
$$Rf(g) = \int _{N^*} f(g n^*H)\, dn^*,  \qquad (g\in G),$$
in accordance with the definition of the Radon transform
in Section \ref{general theory}.

We shall be particularly interested in the values of
$Rf$ on the elements $a_s$. For simplicity we write
$$ Rf(s) = Rf(a_s)= \int _{N^*} f(a_s n^*H)\, dn^*,\quad (s\in\R). $$
For $K$-invariant functions this integral is explicitly
expressed in (\ref{N*integral2}).

Referring to Lemma \ref{Af eigenfunction}, we find
$$\rho_1=
\begin{cases}
\frac12(p-q-1)\, & \text{if } p> q \\
\frac12(q-p+1) & \text{if } p\le q,
\end{cases}
$$
and recall that $\cA f(a)=a^{\rho_1} Rf(a)$.
It follows from (\ref{eigenvalue})
that if $Lf=(\lambda^2-\rho^2)f$, then $(d/ds)^2\cA f=\lambda^2\cA f$,
and hence $Rf$ is a linear combination
\begin{equation}\label{exponential function combination}
Rf(s)=C_1e^{(-\rho_1+\lambda)s}+C_2e^{(-\rho_1-\lambda)s}.
\end{equation}

\begin{Thm}\label{first prop}
Let $f$ be a continuous function on $G/H$, and assume
there exists a constant $C>0$, such that
\begin{equation}\label{Schwartz1}
|f(ka_t)| \le C\ (\ch t)^{-\rho}\, (1+\log(\ch t))^{-{2}},
\end{equation}
for all $t\in\R$, $k\in K$.

\begin{enumerate}
\item[(i)] {\rm Convergence.} The defining integral of $Rf(s)$
converges absolutely for all $s\in\R$.

\item[(ii)]{\rm Compact support.} If $f(ka_t) = 0$ for all
$k\in K$ and $|t| \ge t_0 >0 $, then
$$Rf(s) = 0,$$
\begin{description}
\item[A] for all $|s|\ge t_0$ if $p>q$,
\item[B] for all $s\le -t_0$ if $p\le q$.
\end{description}

\item[(iii)]{\rm Decay.} Let $N\ge {2}$, and put
\begin{equation}\label{SchwartzN}
\|f\|_N=\sup_{t\in\R, k\in K} \,(\ch t)^{\rho}\, (1+\log(\ch t))^{N}\,|f(ka_t)|.
\end{equation}
Assume $\|f\|_N <\infty$. There exists a constant $C_N>0$ (independent of $f$), such that
$$e^{\rho_1 s}\, |Rf(s)| \leq C_N \|f\|_N(1+|s|)^{-(N-{2})}, $$
\begin{description}
\item[a] for all $s\in\R$ if $p\ge q$,
\item[b] for all $s\le 0$ if $p< q$.
\end{description}

\item[(iv)]{\rm Limits.} Assume that $p < q$.
The function $\es Rf(s)$ is bounded on $\R^+$.
If $f$ is $K$-invariant, then the limit $\lim_{s\to\infty}\es Rf(s)$
exists, and if in addition $f$ is positive and
not identically zero,
then this limit is positive.

\end{enumerate}
\end{Thm}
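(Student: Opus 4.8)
The plan is to reduce every assertion to the explicit double integral (\ref{N*integral2}). Since the hypotheses bound $|f(ka_t)|$ by an expression depending only on $t$ (uniformly in $k\in K$), for any continuous $f$ one has $|Rf(s)|\le\int_{N^*}|f(a_sn^*H)|\,dn^*$, and the right-hand side is computed by (\ref{N*integral2}) with $F(\ch^2t)$ replaced by the radial majorant $\|f\|_N(\ch t)^{-\rho}(1+\log\ch t)^{-N}$ (respectively the bound in (\ref{Schwartz1})). Thus (i), (iii) and the boundedness in (iv) all reduce to estimating
$$I(s)=\int_0^\infty\int_0^\infty \Theta(s,x,y)^{-\rho/2}\bigl(1+\half\log\Theta(s,x,y)\bigr)^{-N}x^\alpha y^\beta\,dx\,dy,$$
while the exact limit and positivity in (iv) use (\ref{N*integral2}) with the true $F$. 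The decisive step is a change of variables adapted to each case. In case A ($p>q$) I substitute $\xi=\half e^{s}x^2$, so that $\Theta=y^2+(\ch s+\xi)^2$ on $\xi\ge0$; the factor $x^\alpha\,dx$ produces $(2e^{-s})^{\rho_1}$, which cancels exactly against $e^{\rho_1 s}$. In case B ($q\ge p$) I substitute $\zeta=\ch s-\half e^{s}y^2$ and use the identity
$$y^2+\zeta^2=(\zeta-e^{-s})^2+1,\qquad y^2=2e^{-s}(\sh s-\eta),\quad \eta:=\zeta-e^{-s},$$
so that $\Theta=x^2+(\zeta-e^{-s})^2+1$ and, after shifting by $e^{-s}$, the prefactor $e^{\rho_1 s}$ cancels the $s$-exponentials coming from $y^\beta\,dy$ and from $y^{q-p-1}$. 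In both cases $e^{\rho_1 s}I(s)$ becomes an integral depending on $s$ only through $\ch s$ (case A, hence an even function of $s$) or through $\sh s$ (case B with $s\le0$).

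For (ii) I compute $\min_{x,y\ge0}\Theta(s,x,y)$. In case A the minimum is $\ch^2 s$, attained at $x=y=0$, so the integrand $F(\Theta)=f(k_\theta a_t)$ vanishes identically once $|s|\ge t_0$. In case B the $x^2$-term forces $x=0$, and an elementary one-variable minimisation gives minimum $1$ (attained in the interior) when $s\ge0$, but $\ch^2 s$ (attained on the boundary $y=0$) when $s<0$; hence compact support of $f$ in $t$ is inherited by $Rf$ only for $s\le -t_0$, which is exactly the asymmetry recorded in A and B. For (i), after the substitution the integrand is a product of powers of the new variables times the logarithmic weight; absolute convergence is a matter of checking the exponents, and the factor $(1+\log\ch t)^{-2}$ is precisely what secures convergence at the borderline exponents occurring in the degenerate cases $p=1$ and $p=q+1$, where one of the radial variables drops out.

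For (iii) the point is that after the cancellation the remaining integral depends on $s$ only through $\ch s$ or $\sh s$, both comparable to $\half e^{|s|}$; since $\Theta\ge\ch^2 s$ throughout the domain, one has $\half\log\Theta\ge\log\ch s\sim|s|$, so the logarithmic weight provides the decay while the scale-invariance of the purely polynomial part fixes the normalisation. Carrying the weight through the truncated (logarithmically divergent) polynomial integral then yields the stated bound $(1+|s|)^{-(N-2)}$. For $p\ge q$ the same cancellation makes $e^{\rho_1 s}I(s)$ even (case A), or reduces the range $s>0$ to the same kind of truncation estimate (the case $p=q$), so the bound holds for all $s$; for $p<q$ it is genuine only for $s\le0$, since for $s\to+\infty$ the quantity $e^{s}Rf(s)$ tends to a nonzero constant. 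Indeed, for (iv) (case B, $p<q$) the substitution gives
$$e^{s}Rf(s)=\int_{-\infty}^{\ch s}\int_0^\infty F\bigl(x^2+(\zeta-e^{-s})^2+1\bigr)\,x^{p-2}y^{q-p-1}\,dx\,d\zeta,$$
and as $s\to\infty$ one has $e^{-s}\to0$, $y\to1$, and the upper limit $\to\infty$; dominated convergence (with the majorant from (\ref{Schwartz1})) gives both boundedness on $\R^+$ and the limit $\int_{-\infty}^\infty\int_0^\infty F(x^2+\zeta^2+1)x^{p-2}\,dx\,d\zeta$, which is manifestly positive when $F>0$.

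The main obstacle is the decay estimate (iii): one must convert the heuristic $\log\Theta\gtrsim|s|$ into a uniform bound, valid for all admissible $s$ and independent of $f$, which requires controlling the logarithmically divergent polynomial integral after truncation while tracking the exact power of $(1+|s|)$. Closely tied to this is the need to verify convergence of the limiting scale-invariant integrals at their borderline exponents—exactly the places where the logarithmic weight is indispensable—and to treat the degenerate configurations $p=1$ and $p=q+1$ separately, since there one of the two radial integrations is absent and the power counting changes.
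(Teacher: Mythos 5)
Your overall route is the same as the paper's: reduce everything to the explicit double integral (\ref{N*integral2}), majorize $F$ radially, use the completed-square lower bounds for $\Theta$, change variables so that the Jacobian cancels $e^{\rho_1 s}$ exactly, invoke the borderline integrability lemma (Lemma \ref{integrabilitet}) with the logarithmic weight, and apply dominated convergence for (iv). Parts (i), (ii) and (iv) are correct as you describe them (your minimisation of $\Theta$ in (ii) -- minimum $\ch^2 s$ in case A and for $s\le 0$ in case B, minimum $1$ for $s\ge 0$ in case B -- matches what the paper extracts from (\ref{Theta})), and your substitution $\zeta=\ch s-\half\es y^2$ is the same as the paper's $v=-\sh s+\half\es y^2$ up to sign and shift.

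There is, however, one genuine gap, in part (iii)a for $p=q$ and $s>0$. Your stated mechanism for the decay is ``$\Theta\ge\ch^2 s$ throughout the domain, hence $\half\log\Theta\gtrsim|s|$ and the logarithmic weight provides the factor $(1+|s|)^{-(N-2)}$.'' But your own computation in (ii) shows that for $q\ge p$ and $s\ge 0$ the minimum of $\Theta$ is $1$, not $\ch^2 s$, so this mechanism gives no decay at all on the region where $\Theta$ stays bounded (e.g.\ $y^2$ near $2e^{-s}\ch s$, $x$ bounded). Saying that the case $p=q$, $s>0$ ``reduces to the same kind of truncation estimate'' does not close this: after your case-B substitution the integral becomes $e^{s/2}Rf(s)=c\int_{-\infty}^{\sh s}\int_0^\infty F(1+x^2+w^2)\,x^{\alpha}(\sh s-w)^{-1/2}\,dx\,dw$, and on the bulk region $|w|\le\sh s/2$ the decay must come from the factor $(\sh s-w)^{-1/2}\lesssim e^{-s/2}$, not from the logarithm -- a different splitting than the one you describe. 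The paper handles this sub-case by the separate lower bound $\Theta\ge x^2+\tfrac14\des(y^2-1)^2$ from (\ref{Theta}) and the substitution $x=\tfrac12\es|y^2-1|\xi$, which extracts the needed $e^{-s/2}=e^{-\rho_1 s}$ from the power of the prefactor $\tfrac12\es|y^2-1|$ (the exponent is $\alpha+1-\rho=\tfrac12(p-q-1)=-\tfrac12$ exactly when $p=q$, which is also why this argument works only there). You need to supply an argument of this kind for $p=q$, $s>0$; everywhere else your estimates go through.
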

Note the difference between the conditions for {\bf A}, {\bf B}
versus {\bf a}, {\bf b}.

We also remark that a Schwartz function $f\in\cC(G/H)$ by definition, see \cite{NBA}, Definition 2.1, satisfies
the growth conditions $\|D f\|_N <\infty$, for all $D\in\D(G/H)$ and all $N\in \N$, with $\|\cdot\|_N$ defined
by (\ref{SchwartzN}). In particular, $f\in\cC(G/H)$ satisfies (\ref{Schwartz1}).

\begin{Thm}\label{second thm}
Let $\lambda \ne 0$ be a discrete series parameter,
and let $f$ be the generating function of
Proposition~\ref{generating function1}.
\begin{enumerate}
\item If $\lambda > 0$ and $\mu_\lambda> 0$, then
$Rf=0$. Likewise if $\lambda <0$ and $\mu_\lambda< 0$.
\item If $\lambda > 0$ and $\mu_\lambda\le 0$, then
$Rf(s) = Ce^{(-\rho_1+\lambda)s}\,(s\in\R)$,
for some $C\neq 0$.
\end{enumerate}
\end{Thm}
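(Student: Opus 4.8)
The plan is to begin from the exponential form of $Rf$ that the eigenequation already forces, and then to pin down its two coefficients using the four parts of Theorem~\ref{first prop}. Each generating function $f$ of Proposition~\ref{generating function1} satisfies $Lf=(\lambda^2-\rho^2)f$, so (\ref{exponential function combination}) gives $Rf(s)=C_1e^{(-\rho_1+\lambda)s}+C_2e^{(-\rho_1-\lambda)s}$ for constants $C_1,C_2$ to be determined. Each such $f$ also satisfies (\ref{Schwartz1}), and in fact $\|f\|_N<\infty$ for every $N$, since the factor $(\ch t)^{-\lambda-\rho}$ (respectively $(\ch t)^{-|\lambda|-\rho}$) beats every power of $\log\ch t$ when $\lambda\neq0$; hence every conclusion of Theorem~\ref{first prop} is available.

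The first step handles $C_2$ and the whole range $p\ge q$ at once. Since $e^{\rho_1 s}Rf(s)=C_1e^{\lambda s}+C_2e^{-\lambda s}$, the two-sided decay in Theorem~\ref{first prop}(iii)(a), valid for all $s$ when $p\ge q$, forces $C_1=C_2=0$, so $Rf\equiv0$. This proves part~(1) whenever $p\ge q$, and in particular settles the entire case $q=1$ (there $p\ge q$ always, and only part~(1) can occur). When $p<q$ only the one-sided estimate (iii)(b) for $s\le0$ is available; letting $s\to-\infty$ annihilates the term $C_2e^{-\lambda s}$, leaving $Rf(s)=C_1e^{(-\rho_1+\lambda)s}$, so everything reduces to computing the single constant $C_1$ in the regime $p<q$.

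The second step uses the limit statement (iv). A direct check of the definitions gives $1-\rho_1+\lambda=\mu_\lambda$ when $p<q$, so $e^sRf(s)=C_1e^{\mu_\lambda s}$. Boundedness of $e^sRf(s)$ on $\R^+$ is compatible with $\mu_\lambda>0$ only if $C_1=0$, which finishes part~(1) in the remaining range. For $\mu_\lambda=0$ the generating function is the positive, $K$-invariant function $(\ch t)^{-\lambda-\rho}$, and the positivity clause of (iv) gives $C_1=\lim_{s\to\infty}e^sRf(s)>0$, proving part~(2) in the spherical case.

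The genuinely hard part is the exceptional case $\mu_\lambda=-n<0$: here (iv) only yields $\lim_{s\to\infty}e^sRf(s)=0$, and the functions $\xi_\lambda$ are neither positive nor, in the odd subcase, $K$-invariant, so no soft argument remains. My plan is to read off $C_1$ from the opposite limit $s\to-\infty$, where the argument $\Theta(s,x,y)$ of (\ref{def Theta}) is uniformly large. In the even subcase $n=2m$ the function is $K$-invariant, so by (\ref{N*integral2}) one may replace $F$ by its leading behaviour at infinity, which by Proposition~\ref{generating function1}(ii) together with the final Lemma equals $p_m\,w^{-(\lambda+\rho)/2}$ with $p_m\neq0$ the top coefficient of $P_\lambda$ (the degree count $\deg P_\lambda=m$ is decisive). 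Rescaling near the zero locus of $\ch s-\half\es y^2$ then gives $Rf(s)\sim p_m\,K\,e^{(\mu_\lambda-1)s}$ with $K=\int_0^\infty\!\int_{-\infty}^\infty(1+X^2+D^2)^{-(\lambda+\rho)/2}X^{p-2}\,dX\,dD>0$, so $C_1=p_mK\neq0$; equivalently, for $s\ll0$ the integrand in (\ref{N*integral2}) has the constant sign of $p_m$. In the odd subcase $n=2m-1$ one uses (\ref{N*integral}) with $\cos\theta=(\ch s-\half\es y^2)/\ch t$, and the extra factor is antisymmetric about the same locus, so the leading integral vanishes by parity and $C_1$ is carried by the first correction. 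Showing this correction is nonzero is the main obstacle: I expect to obtain it either by expanding one further order in the rescaled variable (collecting the even corrections coming from $y^{q-p}$, from the curvature of $\Theta$, and from the subleading coefficient of $P_\lambda$) or by an integration by parts in $y$ that replaces the sign-changing factor $\ch s-\half\es y^2$ by a non-cancelling derivative of $\Theta$.
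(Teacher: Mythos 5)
Your treatment of the cases $\mu_\lambda\ge 0$ is correct and follows the paper's route: start from the exponential form (\ref{exponential function combination}), use the two-sided decay of Theorem \ref{first prop}(iii)\textbf{a} to kill both coefficients when $p\ge q$ (which also disposes of $q=1$ and the second clause of part (1)), use the decay at $s\to-\infty$ to kill $C_2$ when $p<q$, and then use Theorem \ref{first prop}(iv) to force $C_1=0$ when $\mu_\lambda>0$ and $C_1>0$ when $\mu_\lambda=0$. Your substitution of (iii)\textbf{b} for the paper's Lemma \ref{exponential decay lemma} in the step at $-\infty$ is a harmless simplification. For the even exceptional case your plan --- dominated convergence after rescaling, with the non-vanishing coming from the top coefficient of $P_\lambda$ (so that $\deg P_\lambda=m$ is indeed decisive) and from positivity of the limiting integral --- is exactly the content of Lemma \ref{exponential limit lemma}. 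However, your limit integral $K$ is not quite the right one: after the substitution the transverse variable is confined to a half-line (in the paper's coordinates, $v\ge 1/2$) and carries the weight $(2v-1)^{(\beta-1)/2}$ inherited from $y^\beta\,dy$; you have silently extended the domain to all of $\R$ and dropped that weight. In the even case this is immaterial, since the correct integral is still manifestly positive.

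It is precisely this inaccuracy that derails your odd case. The leading-order limit $\lim_{s\to-\infty}e^{(1-\mu_\lambda)s}Rf(s)$ does \emph{not} vanish by parity. The extra factor $\ch s-\tfrac12 e^{s}y^2$ is indeed odd about its zero locus, but the limiting integral is $c\int_0^\infty\int_{1/2}^\infty(1-v)\,(u^2+v^2)^{-(\lambda+\rho+1)/2}\,u^\alpha\,(2v-1)^{(\beta-1)/2}\,dv\,du$, whose domain and weight are not symmetric about $v=1$, so there is no cancellation: the apparent parity exists only in your symmetrized, weightless version of $K$. Consequently $C_1$ is carried by the leading term after all, and the real work is to show this explicit integral is nonzero. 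The paper does this by substituting $u=vx$, $2v-1=y$ and invoking $\int_0^\infty(1-y)y^{k-1}(1+y)^{-l}\,dy=(l-2k-1)\,\Gamma(k)\Gamma(l-k-1)/\Gamma(l)$ with $k=(\beta+1)/2$ and $l=\lambda+\rho-\alpha$; one checks $l-2k-1\neq 0$ from $\lambda+\rho-\alpha-\beta-1=-n$ with $n\ge 1$. Your proposed alternatives (expanding to the next order, or integrating by parts to trade the sign-changing factor for something non-cancelling) are aimed at a subleading correction that is not where the answer lives, and in any case you explicitly leave the non-vanishing unproved. As written, the odd exceptional case --- which is exactly where the non-spherical non-cuspidal discrete series of Theorem \ref{third thm}(4) come from --- is not established.
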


Notice that the second statement in (1) is only relevant when $q=1$.
In this case (2) never occurs, and we always have $Rf=0$.
This is also the case for all relative discrete series
parameters (see Subsection \ref{subs ds}).

In conclusion, we have the following characterization of the cuspidal and non-cuspidal discrete series.

\begin{Thm}\label{third thm}
Let $\lambda \ne 0$ be a discrete series parameter.
\begin{enumerate}
\item Let $q>1$. Then the discrete series representation $T_\lambda$
is cuspidal if and only if $\mu_\lambda>0$.
\item Let $q\le p+1$. Then all discrete series are cuspidal
(if $q=1$, then all the relative discrete series
are also cuspidal).
\item All spherical discrete series for G/H are non-cuspidal.
These representations exist if and only if $q>p+1$.
\item There exist non-spherical non-cuspidal discrete series
if and only if $q>p+3$. These representations do not descend to discrete series
of the real projective hyperbolic space.
\end{enumerate}
\end{Thm}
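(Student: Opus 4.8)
The plan is to derive all four assertions from Theorem~\ref{second thm}, combined with the parametrization of the discrete series recalled in Subsection~\ref{subs ds}. The only computational input is the explicit value of $Rf$ supplied by Theorem~\ref{second thm}; the rest is a translation of the sign and parity of $\mu_\lambda$ into the language of cuspidality, sphericity and descent to $\P X$. Throughout, $f$ denotes the generating function of Proposition~\ref{generating function1} attached to $\lambda$, so that by definition $T_\lambda$ is cuspidal if and only if $Rf=0$.

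For (1), recall that when $q>1$ every discrete series parameter has $\lambda>0$. If $\mu_\lambda>0$, Theorem~\ref{second thm}(1) gives $Rf=0$, hence $T_\lambda$ is cuspidal; if $\mu_\lambda\le 0$, Theorem~\ref{second thm}(2) gives $Rf(s)=Ce^{(-\rho_1+\lambda)s}$ with $C\neq 0$, hence $T_\lambda$ is not cuspidal. This is the asserted equivalence. For (2), I would observe that the hypothesis $q\le p+1$ forces $\mu_\lambda>0$: writing $\mu_\lambda=\lambda+\rho-2\rho_c=\lambda+\half(p-q+1)$ and using $\lambda>0$ together with $\half(p-q+1)\ge 0$, one gets $\mu_\lambda>0$, so cuspidality follows from (1). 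The case $q=1$ is subsumed here, but it is cleanest to invoke directly the remark after Theorem~\ref{second thm}, according to which $Rf=0$ always holds for $q=1$, for the ordinary as well as the relative discrete series; hence all of these are cuspidal.

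For (3) and (4), I would read the conclusions off the parametrization. A spherical discrete series has $\mu_\lambda\le 0$ and even, and by the cited results such representations occur exactly when $q>p+1$ (in particular $q>1$); by (1) they are non-cuspidal, which is (3). For (4), combining (1) with the sphericity and descent criteria of Subsection~\ref{subs ds} shows that $T_\lambda$ is non-spherical and non-cuspidal precisely when $\mu_\lambda$ is a negative odd integer. The largest such value, $\mu_\lambda=-1$, corresponds to $\lambda=\half(q-p-3)$, which is positive if and only if $q>p+3$, while the remaining negative odd values require still larger $q$; hence a non-spherical non-cuspidal discrete series exists if and only if $q>p+3$. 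Since its $\mu_\lambda$ is odd, the descent criterion shows it does not descend to $\P X$.

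The main difficulty I anticipate is bookkeeping rather than analysis: one must keep the trichotomy $\mu_\lambda>0$, $\mu_\lambda=0$, $\mu_\lambda<0$, the parity of $\mu_\lambda$, and the two existence thresholds ($q>p+1$ for the spherical/zero case and $q>p+3$ for the first odd exceptional case) consistently aligned, and handle the degenerate $q=1$ parametrization separately. Once Theorem~\ref{second thm} is granted, no further estimates are required, and the whole statement becomes a corollary.
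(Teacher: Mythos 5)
Your proposal is correct and follows essentially the same route as the paper, which itself disposes of this theorem in one line as an easy consequence of Theorem~\ref{second thm} and the parametrization of the discrete series in Subsection~\ref{subs ds}; you have simply written out the bookkeeping (the identity $\mu_\lambda=\lambda+\tfrac12(p-q+1)$, the parity/descent criterion, and the threshold $\lambda=\tfrac12(q-p-3)>0$ for the first odd exceptional parameter) that the authors leave implicit. The only slight wrinkle is your remark that the case $q=1$ is ``subsumed'' in the $\lambda>0$ computation --- for $q=1$ the parameter $\lambda$ may be negative --- but you immediately repair this by appealing, as the paper does, to the observation following Theorem~\ref{second thm} that $Rf=0$ for all (relative) discrete series when $q=1$.
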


This follows easily from Theorem \ref{second thm} and the
description of the discrete series in Subsection \ref{subs ds}.

\section{Proofs}
The proofs are based on the following two lemmas.

\begin{Lemma}\label{bound Theta}
For each $s\in\R$, there exist numbers $a,b>0$ such that
$$
\Theta (s,x,y) \ge
\begin{cases}
y^2+ax^4+b \quad &\text{if} \quad p>q \\
x^2+ay^4+b \quad &\text{if} \quad q\ge p ,
\end{cases}
$$
for all $x,y\in\R$. If $p>q$ or $s\le 0$, then
the numbers $a,b$ can be chosen as follows
$$
a=\frac14 e^{2s},\, b=\cosh^2s.
$$
\end{Lemma}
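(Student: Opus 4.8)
The plan is to treat the two cases in the definition of $\Theta$ separately, in each case expanding the square and tracking the sign of the cross term that appears.

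First I would handle the case $p>q$, which works for all $s$. Here $\Theta(s,x,y)=y^2+(\ch s+\half\es x^2)^2$, and expanding the square gives $\Theta(s,x,y)=y^2+\ch^2 s+(\ch s)\es x^2+\tfrac14\des x^4$. Since $\ch s>0$ and $\es>0$, the cross term $(\ch s)\es x^2$ is nonnegative for every $x$, so dropping it yields $\Theta(s,x,y)\ge y^2+\tfrac14\des x^4+\ch^2 s$. This gives the claimed values $a=\tfrac14\des$, $b=\ch^2 s$ directly.

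Next, for $q\ge p$ I would expand $\Theta(s,x,y)=x^2+y^2+(\ch s-\half\es y^2)^2$ and isolate the part depending on $y$. Writing $g(y):=y^2+(\ch s-\half\es y^2)^2=\tfrac14\des y^4+(1-(\ch s)\es)y^2+\ch^2 s$, the coefficient of $y^2$ is $1-(\ch s)\es=\half(1-\des)$. When $s\le0$ this coefficient is nonnegative, so again the cross term may be dropped and $g(y)\ge\tfrac14\des y^4+\ch^2 s$, giving $\Theta\ge x^2+\tfrac14\des y^4+\ch^2 s$ with the stated $a,b$. Together with the previous paragraph this covers exactly the case ``$p>q$ or $s\le0$'' for which the lemma asserts explicit constants.

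The main obstacle is the remaining case $q\ge p$ and $s>0$, where the coefficient $\half(1-\des)$ is negative and the explicit constants no longer suffice; here I would only establish existence of some $a,b>0$. Setting $z=y^2\ge0$ and $B=\half(1-\des)$, I would view $g$ as the quadratic $G(z)=\tfrac14\des z^2+Bz+\ch^2 s$ on $z\ge0$. The key computation is that the minimum of $G$ equals $\ch^2 s-B^2/\des=1$; indeed $B^2/\des=\tfrac14(e^{-2s}-2+\des)$ while $\ch^2 s=\tfrac14(\des+2+e^{-2s})$, and their difference is exactly $1$. To extract a quartic lower bound I would split off a small multiple of the leading term: for $\varepsilon\in(0,\tfrac14\des)$ write $G(z)=\varepsilon z^2+[(\tfrac14\des-\varepsilon)z^2+Bz+\ch^2 s]$. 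The bracketed quadratic has positive leading coefficient, and its minimum over $z\in\R$ equals $\ch^2 s-B^2/(\des-4\varepsilon)$, which tends to $1$ as $\varepsilon\to0$ by the computation above. Hence for $\varepsilon$ small enough the bracket is bounded below by a positive constant $b$, giving $G(z)\ge\varepsilon z^2+b$ and therefore $\Theta(s,x,y)\ge x^2+\varepsilon y^4+b$ with $a=\varepsilon>0$. I expect the only delicate point to be the bookkeeping in this last case, since one must check that the perturbed minimum stays positive; the identity $\ch^2 s-B^2/\des=1$ is what makes this transparent.
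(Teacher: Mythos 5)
Your proof is correct. The first two cases (all $s$ when $p>q$, and $s\le 0$ when $q\ge p$) coincide with the paper's argument: expand the square and drop the cross term in $y^2$ (resp.\ $x^2$), whose coefficient is nonnegative there. For the remaining case $q\ge p$, $s>0$, your mechanism differs from the paper's, although both rest on the same underlying algebra. The paper rewrites
$$
\Theta(s,x,y)=x^2+\tfrac14e^{2s}(y^2-1)^2+\tfrac12y^2+\tfrac12+\tfrac14e^{-2s},
$$
which encodes the same fact as your discriminant identity $\ch^2 s-B^2/e^{2s}=1$ (completing the square at the vertex $z=1-e^{-2s}$ gives $G(z)=\tfrac14e^{2s}\bigl(z-(1-e^{-2s})\bigr)^2+1$), and then simply uses $e^{2s}\ge1$ and $e^{-2s}>0$ to conclude $\Theta\ge x^2+\tfrac14(y^2-1)^2+\tfrac12y^2+\tfrac12=x^2+\tfrac14y^4+\tfrac34$. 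That yields explicit constants $a=\tfrac14$, $b=\tfrac34$, uniform in $s\ge0$, in one line; your $\varepsilon$-splitting of the leading coefficient together with the continuity of the perturbed minimum yields only non-explicit, $s$-dependent constants, which is nevertheless all the lemma asserts, so there is no gap. The trade-off is that the paper's version is shorter and gives a uniform bound for free, while your computation that the minimum of the quadratic in $z=y^2$ equals exactly $1$ isolates the invariant that makes the whole case work and would generalize more mechanically.
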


\begin{proof} The statements for $p>q$ are straightforward from
(\ref{def Theta}). Assume $p\le q$. From (\ref{def Theta})
we obtain
$$\Theta(s,x,y)=x^2+\frac14e^{2s}y^4+\frac12(1-e^{2s})y^2+\cosh^2s.$$
The statements for $s\leq0$ follow. By rewriting
\begin{equation}\label{Theta}
\Theta(s,x,y)=x^2+
\frac14e^{2s}(y^2-1)^2+\frac12y^2+\frac12+\frac14e^{-2s},
\end{equation}
we finally see that for $s\geq0$
$$\Theta(s,x,y)\ge x^2+
\frac14(y^2-1)^2+\frac12y^2+\frac12=x^2+\frac14y^4+\frac34.\qquad\square$$
\end{proof}

\begin{Lemma}\label{integrabilitet}
Let $a,b,c,d$ and $\gamma$ be $> 0$, then
$$ \int_0^\infty \int_0^\infty (1+x^a+y^b)^{-\gamma} x^{c-1} y^{d-1}\, dx\, dy < \infty,$$
if $$\frac{c}a + \frac{d}b <\gamma .$$
Furthermore
$$ \int_0^\infty \int_0^\infty (1+x^a+y^b)^{-\gamma} x^{c-1} y^{d-1}
(1+\log(1+x^a+y^b))^{-{\delta}}\, dx\, dy < \infty,$$
if {$\delta>1$ and}
$$\frac{c}a + \frac{d}b =\gamma .$$
\end{Lemma}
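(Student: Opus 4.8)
The plan is to reduce the two-dimensional integral to a one-dimensional one by two successive changes of variables, after which both convergence criteria become elementary. First I would substitute $u=x^a$ and $v=y^b$, so that $x^{c-1}\,dx=\frac1a u^{c/a-1}\,du$ and $y^{d-1}\,dy=\frac1b v^{d/b-1}\,dv$. Writing $P=c/a>0$ and $Q=d/b>0$, both stated integrals then take the common model form
$$\frac1{ab}\int_0^\infty\int_0^\infty (1+u+v)^{-\gamma}\,(1+\log(1+u+v))^{-\delta}\,u^{P-1}v^{Q-1}\,du\,dv,$$
where in the first statement the logarithmic factor is simply absent (the case $\delta=0$), and the hypotheses become $P+Q<\gamma$, respectively $P+Q=\gamma$ together with $\delta>1$.

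Next I would pass to the \emph{simplex} coordinates $u=r\xi$, $v=r(1-\xi)$ with $r>0$ and $\xi\in(0,1)$, for which $du\,dv=r\,dr\,d\xi$ and $u^{P-1}v^{Q-1}=r^{P+Q-2}\xi^{P-1}(1-\xi)^{Q-1}$. The decisive point is that the weight $1+u+v=1+r$ depends on $r$ alone, so the integral factors as
$$\frac1{ab}\,B(P,Q)\int_0^\infty (1+r)^{-\gamma}\,(1+\log(1+r))^{-\delta}\,r^{P+Q-1}\,dr,$$
where $B(P,Q)=\int_0^1\xi^{P-1}(1-\xi)^{Q-1}\,d\xi$ is the Euler Beta integral, finite because $P,Q>0$.

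It then remains only to decide convergence of the single radial integral. Near $r=0$ the integrand is $\sim r^{P+Q-1}$ with $P+Q-1>-1$, hence integrable there. Near $r=\infty$ the integrand behaves like $r^{P+Q-1-\gamma}(\log r)^{-\delta}$. For the first statement ($\delta=0$) this is integrable at infinity precisely when $P+Q-1-\gamma<-1$, that is $P+Q<\gamma$, which is the given condition $\frac ca+\frac db<\gamma$. For the second statement the equality $P+Q=\gamma$ forces the power to be exactly $r^{-1}$, and $\int^\infty r^{-1}(\log r)^{-\delta}\,dr$ converges if and only if $\delta>1$, giving the claim.

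I do not expect a genuine obstacle here: the argument is entirely elementary once the two substitutions are in place. The only points requiring a moment's care are the observation that the logarithmic weight depends solely on $u+v$, so that it survives the simplex substitution as a function of $r$ alone and the Beta factor splits off cleanly, and the verification of integrability at the origin, which follows immediately from $P,Q>0$ (equivalently from $a,b,c,d>0$).
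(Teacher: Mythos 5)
Your proof is correct and complete. The paper itself offers no argument here --- its proof of this lemma is literally the single word ``Easy'' --- so there is no official approach to compare against; your reduction via $u=x^a$, $v=y^b$ followed by the simplex substitution $u=r\xi$, $v=r(1-\xi)$, which factors the integral into a Beta function $B(c/a,d/b)$ times the radial integral $\int_0^\infty (1+r)^{-\gamma}(1+\log(1+r))^{-\delta}r^{c/a+d/b-1}\,dr$, is a clean and fully rigorous way to supply the omitted details, and your endpoint analysis at $r=0$ and $r=\infty$ (including the $\delta>1$ borderline case) is exactly right.
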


\begin{proof} Easy. \qed\end{proof}

\subsection*{Proof of Theorem \ref{first prop}}

We may assume that $f$ is $K$-invariant, since otherwise
we can replace it by the continuous function
$gH\mapsto \sup_{k\in K} |f(kgH)|$.
The defining integral (\ref{N*integral2}) of $Rf(s)$
is bounded by the following integral
$$|Rf(s)| \le
\int_0^\infty \int_0^\infty x^\alpha y^\beta\,
\Theta(s,x,y)^{-\rho/2}\,
(1+\log{\Theta(s,x,y))}^{-{2}} \,dx\, dy,$$
with the values of $\alpha$, $\beta$, and $\rho$
from (\ref{first alphabeta}), (\ref{second alphabeta}),
and (\ref{defi rho}). The convergence of this
integral is an easy consequence of the preceding lemmas.
Note that the logarithmic term is
needed for the convergence. This proves (i).

It is seen from Lemma \ref{bound Theta}
that if $p>q$ or $s \le 0$, then
$$
\Theta (s,x,y) \ge \ch^2 s.
$$
We can thus conclude that
if  $f(a_t) = 0$ for all $|t| \ge t_0$, then
the integrand in (\ref{N*integral2}) vanishes
for $|s|\ge t_0$ if $p>q$, and for
$s\le -t_0$ if $p\le q$.
This proves (ii).

We now study the asymptotic behavior of
$Rf(s)$ as $s \to \pm \infty$. The following arguments
have to be adapted slightly in the degenerate cases $p=q+1$ and $p=1$,
where there is no $x$-integral.

Assume first that $p>q$.
Recall that $\alpha=p-2-q$ and $\beta=q-1$, and that
\begin{equation}\label{A integral}
Rf(s) = \int_0^\infty \int_0^\infty
F(\Theta(s,x,y))\, x^\alpha y^\beta\, dx\,dy.
\end{equation}
From Lemma \ref{bound Theta}
$$
\Theta(s,x,y)\ge y^2+\frac14e^{2s}x^4+\ch^2s,
$$
and by the definition of $\|f\|_N$, we have
\begin{equation}\label{upper bound F(Theta)}
|F(\Theta)|\le C \|f\|_N \Theta^{-\rho/2}\, (1+\log(\Theta))^{-N}.
\end{equation}
We insert this bound in (\ref{A integral}),
replace $\Theta(s,x,y)$ by the lower bound, and
substitute $\frac12e^sx^2=\ch(s) \xi^2$ and $y=\cosh(s) \eta$,
so that
\begin{equation}\label{subst result}
y^2+\frac14e^{2s}x^4+\ch^2s=(1+\eta^2+\xi^4)\cosh^2s.
\end{equation}
Simplifying by the relation $\frac12(\alpha+1)+\beta+1=\rho$,
we finally obtain
$$
|Rf(s)|\le C \|f\|_N e^{-\frac12(\alpha+1)s}\int_0^\infty \int_0^\infty
(1+\eta^2+\xi^4)^{-\rho/2}
(1+\log(\cdot))^{-N}
\eta^\beta \xi^\alpha\, d\eta\,d\xi,
$$
where (\ref{subst result}) is the omitted argument in the logarithm.
The logarithmic term is bounded above by a constant times
$$
(1+\log(1+\eta^2+\xi^4))^{-{2}}(1+|s|))^{-(N-{2})}.
$$
With Lemma \ref{integrabilitet} statement (iii)a follows,
except for the case $p=q$.

We now assume $p\le q$. Then $\alpha=p-2$ and $\beta=q-p$.
For $s\leq 0$, we use the estimate
$$
\Theta(s,x,y)\ge x^2+\frac14e^{2s}y^4+\ch^2s,
$$
from Lemma \ref{bound Theta}. Proceeding as before, with
the roles of $x$ and $y$ interchanged, we obtain (iii)b
and the negative direction of the remaining case in (iii)a.

If instead we use the estimate
$$
\Theta(s,x,y)\ge x^2+\frac14e^{2s}(y^2-1)^2,
$$
which follows from (\ref{Theta}), and
substitute $x=\frac12e^{s}|y^2-1|\xi$, we obtain for $s>0$
$$
|Rf(s)|\le C e^{-s/2} (1+s)^{N-{2}}.
$$
Here $C\in]0,\infty]$ is given by
$$
C=\int_0^\infty
(1+\xi^2)^{-\rho/2}\xi^\alpha\,d\xi
\int_0^\infty
|y^2-1|^{(p-q-1)/2}
(1+\log(\cdot))^{-{2}}
 y^\beta\, dy,
$$
where argument in the logarithm is the maximum of $1$ and
$(y^2-1)^2$. The integral is finite precisely when $p=q$, and in this case we
thus obtain the desired bound in the positive direction.
This finishes the proof of (iii).

Assume $p<q$. Recall that $$\Theta(s,x,y)=x^2+y^2+(\ch s-\half\es y^2)^2.$$
Let $v =-\sh s + 1/2\es y^2$, then $y^2 = 1 +2\e-s v - \de-s$ and
$$\Theta(s,x,y)=1+x^2+v^2.$$
With this substitution, we find
\begin{equation}\label{zz}
Rf(s) = \e-s \int_0^\infty \int_{-\sh s}^\infty F(1 + x^2 + v^2)
(1 +2\e-s v - \de-s)^{(\beta -1)/2}  x^\alpha\, dv\, dx,
\end{equation}
with the following upper bound, for $s\ge 0$, since $\beta \ge 1$:
$$
|Rf(s)|\le C \e-s \int_0^\infty \int_{-\infty} ^\infty
\frac{(1 + x^2 + v^2)^{-\rho/2}}
{{[}1+\log(1 + x^2 + v^2){]^2}}
(1+2|v|)^{(\beta -1)/2}  x^\alpha\, dv\, dx < +\infty.
$$
Applying Lebesgue's theorem, we get
$$ \lim_{s\to\infty}\es Rf(s) =
\int_0^\infty \int_{-\infty}^\infty F(1 + x^2 + v^2) x^\alpha dv dx$$
from which (iv) follows. \qed

Before we proceed, we note that if $f$ satisfies a sharper decay
than (\ref{Schwartz1}), we can improve on the decay of $Rf(s)$
expressed in  (iii).

\begin{Lemma}\label{exponential decay lemma}
Let $f$ be a continuous function on $G/H$, and assume that
\begin{equation*}
\sup_{t\in\R, k\in K}\, (\ch t)^{\rho+\gamma} |f(ka_t)| <\infty
\end{equation*}
for some $\gamma>0$. Then for each $\epsilon > 0$
there exists a constant $C>0$, such that
$$ e^{\rho_1s}\, |Rf(s)| \leq C (\ch s)^{-\gamma + \epsilon}, $$
\begin{description}
\item[a]
for all $s\in\R$  if $p\ge q$,
\item[b]
for all $s\le 0$ if $p < q$.
\end{description}
\end{Lemma}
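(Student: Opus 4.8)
The plan is to mimic closely the asymptotic estimates carried out in the proof of Theorem \ref{first prop}(iii), but now exploiting the stronger polynomial decay $(\ch t)^{-\rho-\gamma}$ of $f$ in place of the weaker condition \eqref{Schwartz1}. First I would record the pointwise bound on $F(\Theta)=f(a_tH)$ that follows immediately from the hypothesis, namely
\begin{equation*}
|F(\Theta(s,x,y))|\le C\,\Theta(s,x,y)^{-(\rho+\gamma)/2},
\end{equation*}
which replaces the logarithmically-weighted estimate \eqref{upper bound F(Theta)}. Since $\gamma>0$ gives genuine extra polynomial decay, the logarithmic factor is no longer needed to secure convergence, and one instead extracts a gain of $(\ch s)^{-\gamma}$ from the lower bounds on $\Theta$ supplied by Lemma \ref{bound Theta}.

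Next I would split into the two geometric cases exactly as before. For $p\ge q$ (and, in case \textbf{b}, for $s\le 0$ when $p<q$) I would insert the lower bound $\Theta\ge y^2+\tfrac14 e^{2s}x^4+\ch^2 s$ (respectively with $x,y$ interchanged) from Lemma \ref{bound Theta}, perform the same change of variables $\tfrac12 e^s x^2=\ch(s)\xi^2$ and $y=\ch(s)\eta$ used to derive \eqref{subst result}, and use the identity $\tfrac12(\alpha+1)+\beta+1=\rho$. After substitution the integral becomes a constant multiple of $e^{-\frac12(\alpha+1)s}(\ch s)^{-\gamma}$ times the convergent double integral
\begin{equation*}
\int_0^\infty\int_0^\infty (1+\eta^2+\xi^4)^{-(\rho+\gamma)/2}\,\eta^\beta\xi^\alpha\,d\eta\,d\xi,
\end{equation*}
whose finiteness follows from Lemma \ref{integrabilitet} (now with the strict inequality available because of $\gamma>0$). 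Recalling that $\rho_1=\frac12(p-q-1)=\frac12(\alpha+1)$ in case A and the analogous identity in case B, the prefactor $e^{-\frac12(\alpha+1)s}$ is exactly $e^{-\rho_1 s}$, so multiplying through by $e^{\rho_1 s}$ yields the claimed bound $C(\ch s)^{-\gamma}$, which is even sharper than the stated $(\ch s)^{-\gamma+\epsilon}$.

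The only place where the $\epsilon$ is genuinely needed is the positive direction $s>0$ in the boundary case $p=q$, which parallels the delicate part of the proof of (iii). Here I would instead use the bound $\Theta\ge x^2+\tfrac14 e^{2s}(y^2-1)^2$ coming from \eqref{Theta}, substitute $x=\tfrac12 e^s|y^2-1|\xi$, and separate the $\xi$- and $y$-integrals. The $\xi$-integral is finite, but the $y$-integral now carries an exponent of the form $|y^2-1|^{(p-q-1-\gamma)/2}=|y^2-1|^{-(1+\gamma)/2}$, which is non-integrable near $y=1$; this is the main obstacle. The remedy is to give up a sliver of decay: rather than extracting the full factor $(\ch s)^{-\gamma}$ one extracts $(\ch s)^{-\gamma+\epsilon}$ for arbitrary $\epsilon>0$, keeping a residual positive power $|y^2-1|^{-(1+\gamma-\epsilon)/2}$ (or equivalently reinstating a logarithmic weight) that renders the $y$-integral convergent. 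This controlled loss is precisely what forces the $\epsilon$ into the statement and explains why the estimate is sharp only away from the critical case $p=q$, $s>0$.
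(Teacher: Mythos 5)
Your strategy is exactly the paper's: the published proof consists of the single instruction to replace (\ref{upper bound F(Theta)}) by $|F(\Theta)|\le C\,\Theta^{-(\rho+\gamma)/2}$ and rerun the proof of Theorem \ref{first prop}(iii), and your execution of this for $p>q$ (all $s$) and for $p\le q$ with $s\le 0$ is correct — the substitutions, the identity $\tfrac12(\alpha+1)+\beta+1=\rho$, the identification of $e^{-\frac12(\alpha+1)s}$ (resp.\ $e^{-\frac12(\beta+1)s}$) with $e^{-\rho_1 s}$, and the appeal to the strict case of Lemma \ref{integrabilitet} all check out, giving the sharper bound $(\ch s)^{-\gamma}$ there. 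You are also right that the only delicate point is $p=q$, $s>0$, which the paper passes over in silence.

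Your treatment of that case, however, does not close, and in fact cannot. A small slip first: after substituting $x=\tfrac12 e^s|y^2-1|\xi$ the $y$-exponent is $\alpha+1-\rho-\gamma=(p-q-1)/2-\gamma=-\tfrac12-\gamma$, not $-(1+\gamma)/2$. More importantly, the powers of $e^s$ and of $|y^2-1|$ produced by that substitution are locked together, so ``giving up a sliver of decay'' transfers only $\epsilon$ worth of integrability to the singularity at $y=1$, whereas the deficit there is $\gamma-\tfrac12$; for $\gamma\ge\tfrac12$ no small $\epsilon$ makes $\int_0^1|y^2-1|^{-\frac12-\gamma+\epsilon}\,dy$ finite. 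No repair is possible, because this case of the lemma is false: applying the exact substitution $v=-\sh s+\tfrac12 e^s y^2$ from the proof of Theorem \ref{first prop}(iv) — which works equally well for $p=q$, where $\beta=0$ and the factor $(1+2e^{-s}v-e^{-2s})^{-1/2}$ is still integrable at the endpoint — one finds for a positive $f$ with $f(a_t)\asymp(\ch t)^{-\rho-\gamma}$ that $\liminf_{s\to\infty}e^{s}Rf(s)>0$ by Fatou (geometrically, the set $a_sN^*$ stays at bounded distance from the base point for all $s>0$ when $p\le q$). Hence $e^{\rho_1 s}|Rf(s)|\asymp e^{-s/2}$ as $s\to+\infty$, which contradicts the asserted bound $(\ch s)^{-\gamma+\epsilon}$ as soon as $\gamma>\tfrac12+\epsilon$. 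This defect is inherited from the paper, whose ``proceed as before'' hides the same problem; it is harmless downstream, since only part (b) of the lemma (and Theorem \ref{first prop}(iii)a for $p\ge q$) is ever invoked, but a correct statement of part (a) should either exclude $p=q$, $s>0$ or weaken the exponent there to $-\min(\gamma,\tfrac12)+\epsilon$.
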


\begin{proof}
Replace (\ref{upper bound F(Theta)}) by
\begin{equation*}
|F(\Theta)|\le C \Theta^{-(\rho+\gamma)/2}
\end{equation*}
in the proof of (iii) above and proceed as before.\qed
\end{proof}

\subsection*{Proof of Theorem \ref{second thm}}

Let $\lambda$ be a discrete series parameter, and let
$f$ be the generating function of Proposition
\ref{generating function1}.
We know, see (\ref{exponential function combination}), that
$s\mapsto Rf(ka_s)$ is a linear combination of
$e^{(-\rho_1 +\lambda)s} $ and $e^{(-\rho_1 -\lambda) s}$
for each $k\in K$.

We shall first establish conclusion
(1) in the theorem. Here
we use the following lemma.
The vanishing of $Rf$ will be established by showing that,
for some $\alpha\in\R$ and all $k\in K$,
$e^{\alpha s}Rf(ka_s)$ decays to $0$
as $s\to -\infty$, and that it is bounded in the other direction.

\begin{Lemma} Let $\phi\colon\R\to\R$ be a linear combination
of exponential functions with real exponents.
If $\phi(s)$ is bounded as $s\to\infty$,
and decays to $0$ as $s\to -\infty$, then $\phi=0$.
\end{Lemma}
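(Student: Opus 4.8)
The plan is to extract the asymptotically dominant term at each end of the real line and derive a contradiction from the two growth constraints. First I would discard any vanishing coefficients and write $\phi(s)=\sum_{j=1}^{k}c_j e^{\lambda_j s}$ with all $c_j\neq0$ and with the exponents strictly ordered, $\lambda_1<\lambda_2<\dots<\lambda_k$; assuming $\phi\neq0$ we have $k\geq1$. The key observation is that a single exponential dominates at each infinity. For $s\to+\infty$ the ratio $e^{\lambda_j s}/e^{\lambda_k s}=e^{(\lambda_j-\lambda_k)s}\to0$ for every $j<k$, so $\phi(s)=c_k e^{\lambda_k s}(1+o(1))$ and hence $|\phi(s)|\sim|c_k|\,e^{\lambda_k s}$; symmetrically, for $s\to-\infty$ the smallest exponent dominates, since $e^{\lambda_j s}/e^{\lambda_1 s}=e^{(\lambda_j-\lambda_1)s}\to0$ for every $j>1$, giving $|\phi(s)|\sim|c_1|\,e^{\lambda_1 s}$.

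Next I would read each hypothesis against the corresponding asymptotics. Boundedness as $s\to+\infty$, together with $|\phi(s)|\sim|c_k|\,e^{\lambda_k s}$ and $c_k\neq0$, forces $\lambda_k\leq0$, for otherwise $|\phi(s)|\to\infty$. Decay to $0$ as $s\to-\infty$, together with $|\phi(s)|\sim|c_1|\,e^{\lambda_1 s}$ and $c_1\neq0$, forces $\lambda_1>0$: if $\lambda_1<0$ the term blows up, and if $\lambda_1=0$ it tends to the nonzero constant $|c_1|$. Combining these two conclusions with the trivial inequality $\lambda_1\leq\lambda_k$ yields $0<\lambda_1\leq\lambda_k\leq0$, which is absurd. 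Hence no nonzero $\phi$ can satisfy both hypotheses, and $\phi=0$.

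There is essentially no serious obstacle here; the only point requiring a little care is the boundary case $\lambda_1=0$, where one must note that the requirement ``decays to $0$'' is strictly stronger than ``remains bounded'' and so rules out a surviving constant term. The asymptotic domination step itself can be justified either by the explicit ratio estimate recorded above or, more structurally, by the linear independence of the characters $s\mapsto e^{\lambda_j s}$ for distinct real $\lambda_j$; I would present the self-contained ratio argument, since it needs no external input.
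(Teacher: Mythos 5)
Your proof is correct and is exactly the paper's argument, merely written out in full: the paper's entire proof is the one-line observation that boundedness at $+\infty$ forces the maximal exponent to be $\le 0$ while decay at $-\infty$ forces the minimal exponent to be $>0$, which is impossible for a nonempty sum. Your careful treatment of the boundary case $\lambda_1=0$ and the explicit dominant-term asymptotics are just the details the paper leaves implicit.
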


\begin{proof} The maximal exponent is $\le 0$
and the minimal exponent is $>0$. \qed\end{proof}

For simplicity we assume in what follows that $p>1$.
The arguments for the case $p=1$ are similar.
Notice that if $q=1$ and $\lambda < 0$, then by definition
$\psi_\lambda$ equals the complex conjugate of
$\psi_{-\lambda}$. Hence $R\psi_\lambda$ is the complex
conjugate of $R\psi_{-\lambda}$. Therefore, to prove
Theorem \ref{second thm}(1), we may assume $\lambda>0$.

We first assume $\mu_\lambda > 0$.
It follows from the expression for $f=\psi_\lambda$
in Proposition \ref{generating function1}(i) that
$$|f(k a_t)|\le C (\ch t)^{-\lambda-\rho},$$
for all $k\in K$ and $t\in \R$.

For $p\ge q$, it follows from Theorem \ref{first prop} (iii)a
that
$$e^{\rho_1s}Rf(ka_s)\to 0$$ in both directions $s\to\pm\infty$,
thus $Rf(ka_s)=0$, for all $s\in\R$ and $k\in K$, and hence
$Rf=0$.

For $p<q$, it follows from Lemma \ref{exponential decay lemma} that
$e^{(\rho_1-\lambda+\epsilon)s}Rf(ka_s)$ is bounded for $s\to-\infty$,
for all $\epsilon>0$.
Since $\rho_1-\lambda=-\mu_\lambda+1<1$, we can choose
$\epsilon=\frac12(1-\rho_1+\lambda)$, and conclude that
$$\es Rf(ka_s)=e^{\epsilon s}e^{(\rho_1-\lambda+\epsilon)s}Rf(ka_s)
\to 0$$ for $s\to-\infty$.
On the other hand, from Theorem
\ref{first prop} (iv),
we infer that $\es Rf(ka_s)$ is bounded at $+\infty$. Thus again $Rf=0$.
This proves Theorem \ref{second thm}(1).

Assume now that $\mu_\lambda =0$. This only happens when $p+1<q$.
From Lemma \ref{exponential decay lemma}, we see
that $e^{\rho_1 s}Rf(s)\to 0$ as $s\to -\infty$.
We know that $e^{\rho_1 s}Rf(s)$ is a
linear combination of the exponential
functions $e^{\lambda s}$ and $e^{-\lambda s}$,
but because of the limit relation at $-\infty$,
only the first one can occur. Hence $Rf(s)$
is a multiple of $e^{(-\rho_1+\lambda)s}=\e-s$.
Since $f\ge 0$, we conclude from
Theorem \ref{first prop}(iv) that
$\lim_{s\to\infty} \es Rf(s)=C\neq 0$.
Hence $Rf(s)=C\e-s$.

Assume finally that $\mu_\lambda <0$, then $p+3<q$ and $f=\xi_\lambda$
in Proposition \ref{generating function1}(ii).
From the expressions in (ii), we infer immediately
\begin{equation}\label{bound xi}
|\xi_\lambda(k_\theta a_t)| \le C  (\ch t)^{-(\lambda+\rho)}.
\end{equation}
Furthermore, since $P_\lambda $ has degree $m$, the limit
\begin{equation}\label{limit xi}
\lim_{t\to\infty}(\ch t)^{\lambda+\rho} \xi_\lambda(a_t)
\end{equation}
exists and is non-zero.

We first consider the even case $n=2m$, where $\xi_\lambda$
is $K$-spherical. We shall need the following lemma.

\begin{Lemma}\label{exponential limit lemma}
Assume $p<q$.
Let $f$ be a $K$-invariant continuous function on $G/H$, and assume that
there exists $\gamma>0$ such that
\begin{equation*}
\sup_{t\in\R}\, (\ch t)^{\rho+\gamma} |f(a_t)| <\infty,
\end{equation*}
and such that
\begin{equation*}
\lim_{t\to \infty}\, (\ch t)^{\rho+\gamma} f(a_t)
\end{equation*}
exists and is non-zero. Then
$$ \lim_{s\to -\infty} e^{(\rho_1-\gamma)s}\, Rf(s) $$
exists and is non-zero.
\end{Lemma}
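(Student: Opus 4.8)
The plan is to work entirely inside the explicit integral formula for the case $p<q$ (case B), where $\alpha=p-2$, $\beta=q-p$ and $\rho_1=\half(q-p+1)=\half(\beta+1)$, and to extract the precise asymptotics of $Rf(s)$ as $s\to-\infty$ by a rescaling argument. First I would start from the representation (\ref{zz}) already established in the proof of Theorem \ref{first prop}(iv), and perform the shift $v=w-\sh s$. A short computation using $2e^{-s}\sh s=1-e^{-2s}$ turns the weight $1+2e^{-s}v-e^{-2s}$ into $2e^{-s}w$ and the argument of $F$ into $1+x^2+(w-\sh s)^2$, while the new variable $w$ now runs over $[0,\infty)$. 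Collecting the resulting powers of $e^{-s}$ (namely the leading $e^{-s}$ from (\ref{zz}) and $e^{-s(\beta-1)/2}$ from the weight) produces exactly the prefactor $e^{-\rho_1 s}$, so that
\[
e^{(\rho_1-\gamma)s}Rf(s)=2^{(\beta-1)/2}\,e^{-\gamma s}\int_0^\infty\int_0^\infty F\bigl(1+x^2+(w-\sh s)^2\bigr)\,w^{(\beta-1)/2}x^{\alpha}\,dw\,dx .
\]

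Next, writing $\sigma=-\sh s\to+\infty$ and scaling $x=\sigma\tilde x$, $w=\sigma\tilde w$, the argument of $F$ becomes $1+\sigma^2\Xi$ with $\Xi=\tilde x^2+(\tilde w+1)^2$, while the Jacobian together with the weights contributes $\sigma^{\alpha+(\beta-1)/2+2}=\sigma^{\rho}$ (the exponent identity $\alpha+\tfrac{\beta-1}{2}+2=\rho$ is the analogue of the one used in the proof of (iii)). Since $e^{-\gamma s}\sim(2\sigma)^\gamma$ as $s\to-\infty$, the whole expression equals
\[
e^{(\rho_1-\gamma)s}Rf(s)=2^{(\beta-1)/2+\gamma}\bigl(1+o(1)\bigr)\int_0^\infty\int_0^\infty \sigma^{\rho+\gamma}F\bigl(1+\sigma^2\Xi\bigr)\,\tilde w^{(\beta-1)/2}\tilde x^{\alpha}\,d\tilde w\,d\tilde x .
\]
The two hypotheses now enter precisely: the boundedness $\sup_t(\ch t)^{\rho+\gamma}|f(a_t)|<\infty$ says $|F(W)|\le M\,W^{-(\rho+\gamma)/2}$, whence $\sigma^{\rho+\gamma}|F(1+\sigma^2\Xi)|\le M\,\Xi^{-(\rho+\gamma)/2}$ uniformly in $\sigma$, while the limit hypothesis gives the pointwise convergence $\sigma^{\rho+\gamma}F(1+\sigma^2\Xi)\to L\,\Xi^{-(\rho+\gamma)/2}$ with $L\ne0$.

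The hard part will be justifying the passage to the limit under the integral and verifying that the limiting integral is finite and nonzero; this is exactly where $\gamma>0$ is used. The dominating function $M\,\Xi^{-(\rho+\gamma)/2}\tilde x^{\alpha}\tilde w^{(\beta-1)/2}$ is integrable on $(0,\infty)^2$: near the origin the factor $(\tilde w+1)^2\ge1$ removes any singularity in $\tilde w$, and $\tilde x^{\alpha}$ is integrable since $\alpha=p-2>-1$ for $p>1$; for the tail, Lemma \ref{integrabilitet} applies with $a=b=2$, $c=p-1$, $d=\rho_1$, whose convergence condition $(p-1)+\rho_1<\rho+\gamma$ reduces, via the identity $(p-1)+\rho_1=\rho$, to precisely $\gamma>0$. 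Dominated convergence then yields
\[
\lim_{s\to-\infty} e^{(\rho_1-\gamma)s}Rf(s)=2^{(\beta-1)/2+\gamma}\,L\,C_\infty,\qquad C_\infty=\int_0^\infty\int_0^\infty \Xi^{-(\rho+\gamma)/2}\tilde x^{\alpha}\tilde w^{(\beta-1)/2}\,d\tilde x\,d\tilde w>0,
\]
which is nonzero because $L\ne0$ and $C_\infty>0$. The degenerate case $p=1$ (no $x$-integral, so $\alpha$ absent) is handled by the identical argument in one fewer variable.
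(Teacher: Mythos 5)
Your proposal is correct and follows essentially the same route as the paper: the paper's substitution $v=\frac12(1+e^{2s}(y^2-1))$, $u=e^sx$ is exactly your shift-and-rescale of (\ref{zz}) in disguise (one checks $v_{\rm paper}=\frac12(1-e^{2s})(\tilde w+1)$, $u=e^s\sigma\tilde x$), and both arguments then conclude by the same dominated convergence step, with the decay hypothesis supplying the dominating function and the limit hypothesis the pointwise limit. The exponent bookkeeping ($\alpha+\tfrac{\beta-1}{2}+2=\rho$, $(p-1)+\rho_1=\rho$, reduction of the integrability condition to $\gamma>0$) all checks out.
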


\begin{proof} Recall that for $p<q$, we have
$$\Theta(s,x,y)=x^2+y^2+(\ch s-\half\es y^2)^2.$$
We make the substitutions
\begin{equation}\label{subst}
v = \frac12 (1+\des (y^2 - 1)),\quad u=\es x,
\end{equation}
and find
\begin{equation}\label{subst1}
y^2=e^{-2s}(2v+\des-1),\qquad \ch s-\frac12\es y^2=\e-s(1-v),
\end{equation}
so that
\begin{equation}\label{subst2}
\Theta(s,x,y)=e^{-2s}(u^2+v^2)+1.
\end{equation}
Hence $Rf(s)$ equals
$$ e^{-s(\alpha + \beta +2)}\int_0^\infty \int_{(1-\des)/2} ^\infty
F(1+\de-s (u^2 + v^2)) (2v + \des -1)^{(\beta -1)/2} u^\alpha\, dv\, du.$$
Note that $\alpha+\beta+2=q$.
For $s$ sufficiently large negative we obtain a uniform bound
$$
\begin{aligned}
&e^{qs}|Rf(s)| \\
&\quad\le C \int_0^\infty
\int_{(1-\des)/2} ^\infty [1+e^{-2s}(u^2 + v^2)]^{-(\gamma +\rho)/2}
(2v + \des -1)^{(\beta -1)/2} u^\alpha\, dv\, du
\\
&\quad\le C e^{s(\gamma +\rho)}\int_0^\infty \int_{1/4} ^\infty
(u^2 + v^2)^{-(\gamma +\rho)/2} v^{(\beta -1)/2}
u^\alpha\, dv\, du < +\infty.
\end{aligned}
$$
It follows that we can apply Lebesgue's theorem to the limit
$$
\lim_{s\to-\infty}e^{(\rho_1-\gamma)s}\, Rf(s)
=\lim_{s\to-\infty} e^{(-\gamma-\rho+q)s}\, Rf(s)
.$$
Since
$r^{\gamma+\rho} F(r^2)$
allows a non-zero limit $c$ for $r\to\infty$,
it follows that
$$e^{-(\gamma+\rho)s} F(1+\de-s (u^2 + v^2))
\to c(u^2+v^2)^{-(\gamma+\rho)/2}$$
for $s\to-\infty$.
We conclude that $e^{(-\gamma-\rho+q)}\, Rf(s)$ tends to
$$ c\int_0^\infty \int_{1/2} ^\infty
 (u^2 + v^2)^{-(\gamma +\rho)/2} (2v-1)^{(\beta -1)/2}
u^\alpha\, dv\, du\neq 0
$$
as $s\to-\infty$.
\qed
\end{proof}

Proceeding with the proof of Theorem \ref{second thm},
we recall that $f=\xi_\lambda$. It follows from (\ref{bound xi})
and (\ref{limit xi}) that Lemma \ref{exponential limit lemma}
is applicable in the even case $n=2m$, and we conclude that
$ C:=\lim_{s\to -\infty} e^{(-\lambda+\rho_1)s}\, Rf(s) $ exists and is
non-zero. As $Rf(s)$ is known to be a combination
$C_1e^{(\lambda-\rho_1)s}+C_2e^{(-\lambda-\rho_1)s}$,
we conclude that $C_1=C$ and $C_2=0$.
Hence in this case
$$Rf(s)=Ce^{(\lambda-\rho_1)s}=Ce^{(\mu_\lambda-1)s}.$$

Finally, we consider the odd case, $\mu_\lambda = -n = -2m+1$,
where
$$f(k_\theta a_t)=\cos\theta\,
P_\lambda(\cosh^2t)(\cosh t)^{-\lambda-\rho-2m}.$$
Since $f$ is not $K$-invariant, Lemma
\ref{exponential limit lemma} is not directly applicable.
However, we shall adapt its proof.
It follows from (\ref{second cos}) that
$\cos\theta\ch t=\ch s-\frac12\e-s y^2$, and hence
$$Rf(s)=
\int_0^\infty \int_0^\infty x^\alpha y^\beta\,
(\ch s-\frac12e^sy^2)\,Q(s,x,y)
\,dx\,dy,
$$
where
$$Q(s,x,y)=P_\lambda(\Theta(s,x,y))
(\Theta(s,x,y))^{-\frac12(\lambda+\rho+1)-m}.$$
As in the proof of Lemma
\ref{exponential limit lemma} we perform
the substitutions (\ref{subst}). By application
of (\ref{subst1})-(\ref{subst2}) we see
$$e^{(q+1)s}Rf(s)=
\quad
\int_0^\infty \int_{(1-\des)/2}^\infty
u^\alpha
(2v + \des -1)^{(\beta -1)/2}(1-v)\,
\tilde Q(s,u,v)\,dv\,du,
$$
where
$$
\tilde Q(s,u,v)=P_\lambda(1+e^{-2s}(u^2 + v^2))
(1+e^{-2s}(u^2 + v^2))^{-\frac12(\lambda+\rho+1)-m}.
$$
This gives the domination for $s$ sufficiently large negative
$$
\begin{aligned}
&e^{(q+1)s}|Rf(s)| \\
&\ \le C \int_0^\infty
\int_{(1-\des)/2} ^\infty |1-v|\,[1+e^{-2s}(u^2 + v^2)]^{-(\lambda +\rho+1)/2}
(2v + \des -1)^{(\beta -1)/2}\,u^\alpha\,  dv\, du
\\
&\ \le C e^{s(\gamma +\rho+1)}\int_0^\infty \int_{1/4} ^\infty
(u^2 + v^2)^{-(\lambda +\rho+1)/2} v^{(\beta +1)/2}
u^\alpha\, dv\, du < +\infty.
\end{aligned}
$$
Again we can apply Lebesgue's theorem and obtain
$$ \lim_{s\to-\infty}e^{(1-\mu_\lambda)s} Rf(s) =
c\int_0^\infty \int_{1/2} ^\infty (1-v)(u^2 + v^2)^{-(\lambda +\rho+1)/2}u^\alpha
(2v -1)^{(\beta -1)/2}\, dv\, du,$$
with $c=\lim_{r\to \infty} r^{-(\lambda +\rho+1)}\phi_{n,m}(r^2)\neq 0$.

In order to prove that $Rf(s)= C e^{(\mu_\lambda - 1) s}$, with $C \ne 0$,
we argue as before. We only
need to establish that the following integral is non-zero:
$$I:=\int_0^\infty \int_{1/2} ^\infty
(1-v)(u^2 + v^2)^{-(\lambda +\rho+1)/2}u^\alpha (2v -1)^{(\beta -1)/2}\, dv\, du. $$
We rewrite this by setting $u=vx$ and $2v-1 = y$.
Up to a power of $2$ we obtain
\begin{equation}\label{integral not 0?}
\int_0^\infty (1+x^2)^{-(\lambda +\rho + 1)/2}x^\alpha dx \int_0^\infty
(1-y)y^{(\beta -1)/2}(y + 1)^{-(\lambda + \rho - \alpha)} dy,
\end{equation}
in which the first integral is clearly finite and non-zero.

We now use the formula
$$ \int_0^\infty y^{k-1} (1+y)^{-l} dy =
B(k,l-k)= \frac{\Gamma(k)\Gamma(l-k)}{\Gamma(l)}, $$
valid for $0<k<l$. It follows that for $0<k<l-1$
$$ \int_0^\infty (1-y)y^{k-1} (1+y)^{-l} dy =
(l-2k-1)\frac{\Gamma(k)\Gamma(l-k-1)}{\Gamma(l)}.$$
Hence the second integral
in (\ref{integral not 0?}) is zero
if and only if
$$(\lambda + \rho - \alpha)-(\beta +1)-1=0.$$
With the current values of $\alpha,\beta,\lambda$ and $\rho$,
we have $\lambda + \rho - \alpha-\beta-1=-n$, and hence
$I\neq0$.
This finishes the proof of Theorem \ref{second thm}.\qed

\bigskip

\end{document}